\documentclass[12pt]{amsart}
\usepackage[utf8]{inputenc}
\usepackage{url}
\usepackage{hyperref}
\usepackage{amsmath,amssymb}
\usepackage{amsfonts}
\usepackage{amsthm}
\usepackage{xcolor}
\usepackage{tikz}
\usepackage{tkz-graph}
\usepackage{tkz-berge}
\newtheorem{thm}{Theorem}
\newtheorem{lem}{Lemma}

\newtheorem{prop}[thm]{Proposition}
\newtheorem{cor}[thm]{Corollary}
\newtheorem{claim}[thm]{Claim}
\newtheorem{ex}[thm]{Example}
\newtheorem{rem}[thm]{Remark}
\setlength{\textheight}{23.0cm}
\setlength{\textwidth}{16.5cm}
\setlength{\footskip}{1.0cm}
\calclayout
\def\Den#1{{\mathcal D}_{#1}}
\def\num#1{\operatorname{num}\left(#1\right)}
\def\den#1{\operatorname{den}\left(#1\right)}
\def\Yang#1#2{{{\operatorname Y}\kern-0.35em{\operatorname B}}^{#1}_{#2}}

\def\spectreY#1{{\operatorname{spec}}_{#1}}
\def\spectre#1{{\widehat{\operatorname{spec}}}_{#1}}
\usepackage{amsmath, amssymb}
\def\refeq#1{(\ref{#1})}

\def\algo{\mathtt{algo}}
\def\triv{\mathtt{triv}}
\def\jump{\mathtt{jump}}
\def\jumppath{\mathfrak{jump}}
\def\djumppath{{\mathfrak{jump}^\dag}}
\def\jumpop{\operatorname{jump}}
\def\opt{\mathtt{opt}}
\def\lcm{\operatorname{lcm}}
\def\gcd{\operatorname{gcd}}
\def\staircase{\mathtt{staircase}}
\def\Qsc{\mathtt{Qsc}}
\def\rai{\mathfrak{raise}}
\def\pathup{\mathfrak{up}}
\def\addstep{\mathfrak{add\_step}}
\def\raiseop{\operatorname{raise}}
\def\upop{\operatorname{up}}
\def\addop{\operatorname{add\_step}}
\def\affineop{{\operatorname{A}}}

\def\affinepath{\Phi}
\def\vx{{\mathtt{X}}}
\def\vy{{\mathtt{Y}}}
\def\vhy{{\widehat{\mathtt{Y}}}}
\def\setspecY{\mathfrak{Spec}}
\def\setspec{\widehat{\mathfrak{Spec}}}
\def\std#1{\operatorname{std}(#1)}
\def\affineopvec{\Lambda}
\def\i{\mathbf i}
\title{Hunting the poles in the staircases}
\author{Christophe Carré}
\address{GR$^2$IF. Université de Rouen-Normandie. Technopôle du Madrillet
BP 12 Avenue de l'Université
F-76801 Saint-Étienne-du-Rouvray Cedex}
\email{christophe.carre@univ-rouen.fr}
\author{Ulysse Goncalves}
\address{GR$^2$IF. Université de Rouen-Normandie. Technopôle du Madrillet
BP 12 Avenue de l'Université
F-76801 Saint-Étienne-du-Rouvray Cedex}
\email{ulysse.goncalves@etu.univ-rouen.fr}
\author{Jean-Gabriel Luque}
\address{GR$^2$IF. Université de Rouen-Normandie. Technopôle du Madrillet
BP 12 Avenue de l'Université
F-76801 Saint-Étienne-du-Rouvray Cedex}
\email{jean-gabriel.luque@univ-rouen.fr}
\date{}

\begin{document}
\begin{abstract}
 Motivated by applications to the fractional quantum Hall effect and, in particular, to the Bernevig-Haldane conjectures, we investigates the behavior of Macdonald polynomials under specializations of the form $q^{a}t^{b}=1$.
Our main focus is to explain, in a simple and purely combinatorial way, why certain nonsymmetric Macdonald polynomials indexed by staircase vectors with steps of height $a$ and width $b$ remain regular at the specialization $q^{a}t^{\,b+1}=1$, despite the presence of potential poles in their rational coefficients. To this end, we introduce a set of combinatorial tools that track how poles are created or cancelled along paths in the Yang-Baxter graph. 
By carefully constructing paths from the zero vector to the staircase  and analyzing the resulting denominators, we show that the absence of certain poles follows from intrinsic symmetries and cancellations encoded in the Yang-Baxter graph. \end{abstract}
\keywords{Macdonald Polynomials, Yang--Baxter Graph, Double Affine Hecke Algebra, Applications to Fractional Hall effect }
\maketitle
\section{Introduction}
{\it ``Macdonald polynomials are too difficult!''} This is a remark one often hears from computer scientists when they are asked to automate the computation of these polynomials. They are not entirely wrong. Indeed, as they appear in much of the literature, Macdonald polynomials\cite{Macdonald1988,Macdonald1995,Macdonald2000} suffer from two main difficulties. The first one is conceptual: their motivation stems from deep and notoriously difficult areas of mathematics, such as random matrix theory, generalization of the Dyson’s conjecture, double affine Hecke algebras (DAHA), root systems and Coxeter groups etc. The second is structural: these polynomials are enormous multivariate polynomials whose coefficients, rational functions in the two parameters 
$q$ and 
$t$, at first glance appear almost random and devoid of any discernible structure. For someone who has not studied the subject extensively, learning these objects may legitimately appear inaccessible, and their manipulation genuinely arduous. Nevertheless, with a bit of perspective, these polynomials can be handled in a relatively simple way using fairly elementary algorithmic and combinatorial tools. For instance, the symmetric Macdonald polynomials can be defined via a Gram-Schmidt orthogonalization process, while their nonsymmetric counterparts are characterized by simple recurrence relations. The various applications of these polynomials, particularly, though not exclusively, in physics, call for the development of effective knowledge transfer strategies, so that these objects can be studied in a genuinely interdisciplinary way, spanning computer science, mathematics, and physics. The combinatorial approach pioneered by Alain Lascoux\cite{Lascoux2001,MacdoforDummies,Lascoux2013}, based on the study of the Yang–Baxter graph, provides a way to simplify the understanding of these objects. Our paper fits into this framework.

In order to describe the wave functions associated with states of the fractional quantum Hall effect\footnote{ The knowledge of these wave functions is of particular importance, since certain fault-tolerant models of quantum computation~\cite{Kitaev1997} are based on quasiparticles, called anyons, which arise (at least theoretically) precisely in certain regimes of the fractional quantum Hall effect. This is the setting of topological quantum computation~\cite{FKLW2003}, whose connections with these wave functions and their combinatorics, especially the Read--Rezayi states, have been developed in the comprehensive work~\cite{NSSFD2008}.}, B.~A.~Bernevig and F.~D.~M.~Haldane~\cite{BH2008_1,BH2008_2} proposed candidates expressed in terms of Jack polynomials, with the parameter $\alpha$ specialized to a rational value.
These candidates were selected according to clustering properties, which translate algebraically into vanishing conditions (from a physical point of view, this means that the wave function prevents the particles from clustering “too” strongly). The vanishing conditions translate into factors under the form $(x_i-x_j)^a$. This led to the formulation of three conjectures~\cite{BH2008_2}, one of which was proved in~\cite{CDL2018}. The route that led to the resolution of this conjecture required the use of more general classes of polynomials. First, symmetric Macdonald polynomials, of which Jack polynomials arise as a degeneration, were used, since the two-parameter deformation $(q,t)$ allows one to effectively distinguish the factors in the expressions $(x_i-x_j)^a$ and to reformulate the conjecture in a more general framework that does not necessarily degenerate to Jack polynomials. Subsequently, nonsymmetric Macdonald polynomials made it possible to exploit the combinatorics of the Yang-Baxter graph, as well as the combinatorial structures arising from the representation theory of the double affine Hecke algebra.

This paper focuses on the following subproblem. Since the coefficients of Macdonald polynomials are rational functions of the parameters $q$ and $t$, they may have poles. When one specializes these parameters by imposing relations of the form 
$q^{a}t^{b}=1$, certain Macdonald polynomials become ill-defined. Why, then, are the Macdonald polynomials that appear in the conjectures of Bernevig and Haldane well defined at their corresponding specializations? And, in pratical terms, is there a simple way to prove this? More precisely, we ask why the nonsymmetric Macdonald polynomials indexed by staircase shapes with steps of height $a$ and width $b$ have no poles at the specialization $q^{a}t^{\,b+1}=1$.
With this objective in mind, we develop a collection of combinatorial tools, based on the Yang-Baxter graph, that allow us to understand how poles arise in Macdonald polynomials.

The paper is organized as follows. In Section~\ref{sec-background}, we recall the main definitions and properties of nonsymmetric Macdonald polynomials. In particular, we review their role in the representation theory of double affine Hecke algebras, as well as the recurrence relations that allow one to compute them from the Yang--Baxter graph. In Section~\ref{sec-preminary}, we introduce several notions and tools that will be studied in the following sections. In particular, in Section~\ref{subsec-denom} we study the maps that associate to each path in the Yang-Baxter graph a polynomial related to the denominators of the Macdonald polynomials at the initial and terminal vertices of the path. In Subsection~\ref{subsec-spec}, we make precise the notion of specialization at $q^{a}t^{b}=1$, and we clarify what this means when $a$ and $b$ are not coprime. This is precisely the property used in Section~\ref{sec-jump} to show that only a single pole is produced when passing from an index of the form \([\ldots,a^{k},b,\ldots]\) to \([\ldots,b,a^{k},\ldots]\) with \(b>a\). This result is what we call the Jump Lemma. The dual Jump Lemma, which concerns parts of the form \([\ldots,a,b^{k},\ldots]\), is also described. By iterating these results, we explain in Section~\ref{sec-preminary} which poles arise when following a path
$
[\ldots,a^{k},b^{\ell},\ldots]\;\longrightarrow\;[\ldots,b^{\ell},a^{k},\ldots]$.
From this point, all the necessary combinatorial tools have been introduced, and in Section~\ref{sec-staircase} we illustrate them in order to understand why certain poles do not appear in staircase Macdonald polynomials. We show that this property can be proved in a purely combinatorial way by carefully constructing a path from $[0^{n}]$ to the staircase vector of interest and tracking the poles using the jump results established in the previous sections.

\section{Background\label{sec-background}}

\subsection{Hecke algebra}
The Hecke algebras are obtained by deforming Coxeter groups with a single parameter; this deformation was introduced in order to better reflect and study the geometry and combinatorics of the Bruhat order \cite{KazLus1979,Iwahori1965}.\\
This algebra can be defined formally by generators and relations.  
In the case relevant to our setting, the Hecke algebra $\mathcal{H}_N(t)$ of type~$A_{n-1}$ is the unital associative
$\mathbb{C}(t)$-algebra generated by formal elements $T_1,\dots,T_{n-1}$, called \emph{intertwiners}, that are subject to the following relations:
\begin{itemize}
    \item Quadratic relations:
    \begin{equation}\label{eq-quad}
        (T_i - t)(T_i + 1) = 0, \qquad 1 \le i \le N-1,
    \end{equation}
    \item Braid relations:
    \begin{equation}\label{eq-braid}
        T_i T_{i+1} T_i = T_{i+1} T_i T_{i+1},
        \qquad 1 \le i \le N-2,
    \end{equation}
    \item Commutation relations:
    \begin{equation}\label{eq-com}
        T_i T_j = T_j T_i,
        \qquad |i-j| > 1.
    \end{equation}
\end{itemize}

These relations mirror those of the symmetric group, except that the involution
relation $s_i^2 = 1$ is replaced by its one-parameter deformation.

As in the case of the symmetric group, it is useful to realize this algebra
as an algebra of operators acting on the polynomial algebra
$\mathbb{C}(t)[\mathbb X]$, $\mathbb X=\{x_1,\dots,x_N\}$ denoting the set of (formal) variables. This can be done by assimilating it to the Demazure-Lusztig operator,
\begin{equation}\label{eq-real}
T_i = (t-1)\pi_i + s_i
     = \partial_i (t \vx_{i+1} - \vx_i) + t,
\end{equation}
where $s_i$ denotes the transposition operator exchanging the variables
$x_i$ and $x_{i+1}$, $\vx_i$ is the operator which multiplies by the variable $x_i$, $\partial_i = (1 - s_i)\frac1{\vx_i - \vx_{i+1}}$ is the
divided-difference operator\footnote{Notice that the action of $\partial_i$ on polynomials is well-defined because, for any polynomial $P$, $P(1-s_i)$ is divisible by $x_i-x_{i+1}$.}, and $\pi_i = \vx_i\partial_i\,$ is the isobaric
divided-difference operator. \\\emph{Notice that our operators act on their left.} For instance, we have $x_i^2\partial_i=x_i+x_{i+1}$. The introduction of the notation $\vx_i$ for multiplication operators may
seem artificial, but it helps avoid ambiguities in expressions (for instance,
between $\vx_i \partial_i$ and $x_i \partial_i$).

The right most expression in the double equality \refeq{eq-real} is particularly interesting because it implies that the kernel of the operator $T_i-t$ is the space of the polynomials that are symmetrical in the variables $\{x_i,x_{i+1}\}$.
\begin{claim}\label{claim-kerT_i-t}
Let $P\in\mathbb C(t)[\mathbb X]$. We have $Ps_i=P$ if and only if $P(T_i-t)=0$.
\end{claim}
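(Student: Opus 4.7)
The plan is to reduce the equivalence to a trivial statement about the divided-difference operator $\partial_i$ by exploiting the second form of $T_i$ given in \refeq{eq-real}. Subtracting $t$ from that identity yields
\[
T_i - t \;=\; \partial_i\,(t\vx_{i+1} - \vx_i),
\]
so for any $P \in \mathbb{C}(t)[\mathbb X]$ we have
\[
P(T_i - t) \;=\; (P\partial_i)\cdot (t x_{i+1} - x_i).
\]
Thus the whole statement collapses to the assertion that $P\partial_i = 0$ if and only if $Ps_i = P$, which is immediate from the definition $\partial_i = (1-s_i)\tfrac{1}{\vx_i - \vx_{i+1}}$.

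For the forward direction, assume $Ps_i = P$. Then $P(1-s_i) = 0$, so $P\partial_i = 0$, and multiplying by the operator $(t\vx_{i+1}-\vx_i)$ on the right preserves this zero, giving $P(T_i - t) = 0$.

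For the converse, assume $P(T_i - t) = 0$, i.e.\ $(P\partial_i)\cdot(t x_{i+1} - x_i) = 0$ as a polynomial in $\mathbb{C}(t)[\mathbb X]$. Since $t x_{i+1} - x_i$ is a nonzero element of the integral domain $\mathbb{C}(t)[\mathbb X]$, multiplication by it is injective, so $P\partial_i = 0$. Multiplying this identity by $\vx_i - \vx_{i+1}$ on the right gives $P(1-s_i) = 0$, i.e.\ $Ps_i = P$.

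The only point requiring care is that our operators act on the left of $P$ while being written to the right, so one must track the order of composition when expanding $P\partial_i(t\vx_{i+1}-\vx_i)$; there is no genuine obstacle, and the argument is essentially a one-line computation once \refeq{eq-real} is written in the form $T_i - t = \partial_i(t\vx_{i+1}-\vx_i)$.
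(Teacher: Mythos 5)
Your proof is correct and is exactly the elaboration the paper intends: the paper gives no explicit proof of this claim, merely noting that it follows from the rightmost expression in \refeq{eq-real}, namely $T_i - t = \partial_i(t\vx_{i+1}-\vx_i)$, and your argument simply unpacks that observation, using integrality of $\mathbb{C}(t)[\mathbb{X}]$ in one direction and the divisibility built into $\partial_i$ in the other.
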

This simple remark will be of a great interest for our purpose.\\
The Double Affine Hecke Algebra (DAHA) was introduced by Cherednick \cite{Cherednik1995}, in the aim to investigate conjecture on Knizhnik-Zamolodchikov equation and Macdonald polynomials. It is a $\mathbb C(q,t)$-algebra that extends  the previous construction by adding a new formal parameter $q$ and  two sets of generators $\vx_1^{\pm},\ldots, \vx_N^{\pm}$ and $\vy_1^{\pm},\ldots, \vy_N^{\pm}$ submitted to the additional relations:
\begin{itemize}
\item Inversions:
\begin{equation}\label{eq-inversion}
\vx_i^+\vx_i^{-}=Id,\qquad\vy_i^+\vy_i^{-}=Id.
\end{equation}
So in the rest of the paper, we denote $\vx_i=\vx_i^+$ and
$\vy_i=\vy_i^+$
    \item Commutations:
    \begin{equation}\label{eq-homcom}
    \vx_i\vx_j=\vx_j\vx_i\mbox{ and }
    \vy_i\vy_j=\vy_j\vy_i\mbox{ for any }1\leq i,j\leq N,
    \end{equation}
    and
    \begin{equation}\label{eq-hetcomm}
    \vx_i\vy_j=\vy_j\vx_i,\qquad T_i\vx_j=\vx_jT_i,\mbox{ and }T_i\vy_j=\vy_jT_i\mbox{ when }|i-j|>1.
    \end{equation}
    \item Bernstein–Lusztig relations:
    \begin{equation}\label{eq-BLR}
    T_i\vx_iT_i=\vx_{i+1}\mbox{ for any } 1\leq i\leq N-1.
    \end{equation}
    \item Dual-Bernstein–Lusztig relations:
    \begin{equation}\label{eq-dualBLR}
    T_i\vy_{i+1}T_i=\vy_{i}\mbox{ for any } 1\leq i\leq N-1.
    \end{equation}
    \item Cross relations:
    \begin{equation}\label{eq-cross}
    \vy_i\vx_1\vx_2\cdots\vx_N=q\vx_1\vx_2\cdots\vx_N\vy_i\mbox{ for any }1\leq i\leq N,
    \end{equation}
    and
    \begin{equation}\label{eq-dcross}
    \vx_i\vy_1\vy_2\cdots\vy_N=q\vy_1\vy_2\cdots\vy_N\vx_i\mbox{ for any }1\leq i\leq N.
    \end{equation}
\end{itemize}
Cherednik’s key idea is that the interactions between the two commutative subalgebras generated respectively by the $X_i$ and the $Y_i$ are intimately connected with the structure of Macdonald polynomials.
\subsection{Cherednik's representation and non symmetric Macdonald polynomials}
\ \\
 Cherednik’s representation realizes the DAHA as a faithful algebra of operators acting on the Laurent polynomial ring $\mathbb{C}(q,t)[\mathbb{X}^{\pm1}]$. 
In this representation, the intertwiners act as the Demazure--Lusztig operators, exactly as in the classical representation of the Hecke algebra, 
the elements $\vx_i$ act by multiplication by the variables $x_i$, 
and the generators $\vy_i$ are realized as the Cherednik--Dunkl operators
\begin{equation}\label{eq-Cherednik-Dunkl}
\vy_i=T_i\cdots T_{N-1}\tau T_1^{-1}\cdots T_{i-1}^{-1},
\end{equation}
where $\tau$ is defined by
\begin{equation}\label{eq-tau}
(P \tau)(x_1, x_2, \dots, x_N)
= P\!\left(\frac{x_N}{q}, x_1, x_2, \dots, x_{n-1}\right),
\end{equation}
for any Laurent polynomial $P$.\\
Notice that one has the following additional commutation rules
\begin{eqnarray}
T_i\tau=\tau T_{i+1}\mbox{ for }1\leq i<N-1
\\
\vx_i\tau=\tau \vx_{i-1}\mbox{ for }i>1\mbox{ and }\vx_1\tau=\frac1q\tau\vx_N,\label{eq-commXtau}\\
\vy_i\tau=\tau \vy_{i+1}\mbox{ for }1\leq i<N\mbox{ and }\vy_N\tau=q\tau\vy_1\label{eq-commYtau}.
\end{eqnarray}
Since the Cherednik--Dunkl operators pairwise commute, they can be diagonalized simultaneously.  
Their joint spectrum decomposes the space $\mathbb{C}(q,t)[\mathbb{X}]$ as a direct sum of one-dimensional spectral subspaces.
Each of these subspaces is generated by a Macdonald polynomial.  
More precisely, to every vector\footnote{a vector is wrote as a word of length $N$ on the alphabet $\mathbb N$.} $v = v_1\cdot v_2\cdots\cdot v_N$  one associates a Macdonald polynomial $M_v$, defined up to a nonzero scalar, as the unique polynomial with $\triangleright$-leading monomial $x^v = x_1^{v_1}\cdots x_N^{v_N}$ and satisfying
\begin{equation}\label{eq-defMacdo}
M_v\,\vy_i \;=\; q^{v_i}t^{*}\, M_v,\qquad 1\le i\le N,
\end{equation}
 the order $\underline{\triangleright}$ being defined by
\begin{equation}\label{eq-triangleright}
u\;\underline{\triangleright}\;v\mbox{ if and only if }u^+\geq v^+ \mbox{ or }(u^+=v^+\mbox{ and } u\geq v),
\end{equation}
where $w^+$ denotes the unique weakly decreasing vector obtained by permuting the element of $w$, and $\geq$ is the dominant order given by
\begin{equation}\label{eq-dominantorder}
u\geq v\mbox{ if and only if for every } 1\leq k\leq N,\; \sum_{i=1}^ku_i\geq \sum_{i=1}^kv_i.
\end{equation}
Notice that, for the purposes of the statement, we do not need to specify the exact exponent of $t$ in advance; this is the meaning of the notation $t^*$. Indeed, the collection of vectors of the form 
$[\,q^{v_1}t^{*},\ldots,q^{v_N}t^{*}\,]$ 
provides enough distinct spectral values to decompose the space as a direct sum of one-dimensional eigenspaces. In other words, there exists a bijection between $\mathbb N^N$ and the set 
\begin{equation}\label{eq-defspect}
\setspecY=\left\{\spectreY{v}:=\left[\frac{M_v\,\vy_1}{M_v},\ldots,
\frac{M_v\,\vy_N}{M_v}\right]\mid v\in\mathbb N^N\right\}.
\end{equation}
The precise eigenvalue of $M_v$ for the operator $\vy_i$ is
\begin{equation}\label{eq-valspect}
\spectreY{v}[i] = q^{v_i}t^{\#\{j\mid v_j < v_i\mbox{ or }(v_j = v_i\mbox{ and } j \geq i)\}-i}
\end{equation}
For instance, we have 
\begin{equation}\label{eq-specex}
\spectreY{[1,0,2,2,0,1]}=[qt^3, 1, q^2t^3, q^2t, t^{-4}, qt^{-3}].
\end{equation}
In practice, one prefers to work with the operators $\vhy_i = t^{\,i-1}\vy_i$, since their $t$–powers are all distinct and lie within the interval $\{0,\ldots,N-1\}$. More precisely, setting 
\begin{equation}\label{eq_defhspec}
\setspec=\left\{\spectre{v}:=\left[\frac{M_v\,\vhy_1}{M_v},\ldots,
\frac{M_v\,\vhy_N}{M_v}\right]\mid v\in\mathbb N\right\},
\end{equation}
one has
\begin{equation}\label{eq-valhspec}
\spectre{v}[i]=q^{v_i}t^{\std{v}_i-1},
\end{equation}
where $\std v$ denotes the standarized of $v$ that is the unique permutation $\sigma$ of $\mathfrak S_N$ such that 
\begin{equation}\label{eq-defstd}
\sigma_i > \sigma_j\mbox{ if and only if } v_i>v_j\mbox{ or } (v_i = v_j\mbox{ and }i<j). 
\end{equation}
For instance, we have
\begin{equation}\label{eq-exhspec}
\spectre{[1,0,2,2,0,1]}=[qt^3,t,q^2t^5,q^2t^4,1,qt^2]\mbox{ because }\std v=426513.
\end{equation}
In this way, the set of spectra becomes easier to describe. With this notation, the  Dual-Bernstein-Lusztig relations read
\begin{equation}\label{eq-dBLhat}
T_i\vhy_{i+1}T_i=t\vhy_i\mbox{ for }0\leq i\leq N-1.
\end{equation}
We will later make use of the following property, whose proof is immediate from the definition of~$\setspec$.

\begin{claim}\label{claim-spectre}
Let $P$ be a nonzero simultaneous eigenfunctions of the operators $\vhy_1,\,\ldots,\,\vhy_N$ such that for some $1\leq i\leq N-1$, $\frac{P\vhy_i}{P\vhy_{i+1}}=t^a$. Then we have  $a=1$.
\end{claim}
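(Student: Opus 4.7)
My plan is to exploit the simplicity of the joint spectrum of the Cherednik--Dunkl operators on $\mathbb C(q,t)[\mathbb X]$. Since the Macdonald polynomials $\{M_v\}_{v\in\mathbb N^N}$ form a basis of simultaneous eigenfunctions of $\vhy_1,\dots,\vhy_N$ with pairwise distinct spectra, any nonzero simultaneous eigenfunction $P$ must be proportional to some $M_v$ and so share its spectrum $\spectre v$. Using the explicit description \refeq{eq-valhspec}, the hypothesis of the claim rewrites as
\begin{equation*}
t^a=\frac{P\vhy_i}{P\vhy_{i+1}}=\frac{\spectre{v}[i]}{\spectre{v}[i+1]}=q^{v_i-v_{i+1}}\,t^{\std{v}_i-\std{v}_{i+1}}.
\end{equation*}

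I would then split the argument into two short steps. First, since $q$ and $t$ are algebraically independent over $\mathbb C$, the equality above forces the $q$-exponent $v_i-v_{i+1}$ to vanish, so that $v_i=v_{i+1}$. Second, I need to show that whenever two consecutive entries of $v$ coincide, the associated standardization satisfies $\std{v}_i=\std{v}_{i+1}+1$, which will immediately yield $a=1$.

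For this second step I would argue directly from \refeq{eq-defstd}: let $S=\{j\mid v_j=v_i\}$ be the block of positions carrying the common value $v_i=v_{i+1}$. The standardization assigns to the positions in $S$ a set of consecutive integer ranks, with smaller indices receiving larger ranks. Because $i$ and $i+1$ both lie in $S$ and no integer lies strictly between them, $\std{v}_i$ and $\std{v}_{i+1}$ must be two consecutive ranks in this block with $\std{v}_i=\std{v}_{i+1}+1$. No serious obstacle is expected; the only point that deserves a little care is precisely this last combinatorial observation, since without it one could only conclude that $a$ is a positive integer rather than exactly $1$.
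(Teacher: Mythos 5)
Your proof is correct and is essentially the argument the paper has in mind when it declares the claim ``immediate from the definition of $\setspec$'': any nonzero joint eigenfunction is proportional to some $M_v$, the ratio of spectral values is $q^{v_i-v_{i+1}}t^{\std v_i-\std v_{i+1}}$, algebraic independence of $q$ and $t$ forces $v_i=v_{i+1}$, and the definition of standardization then gives $\std v_i=\std v_{i+1}+1$. You simply spell out the details the paper leaves implicit, including the small but necessary combinatorial observation about consecutive ranks in a block of equal entries.
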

\subsection{Yang-Baxter graph}
From a computational point of view, Macdonald polynomials can be computed by applying the following induction rules:
\begin{eqnarray}
    M_{0^N}=1,\label{eq-MacdoInd1}\\
    M_{v\cdot s_i}=M_v\left(T_i+\frac{1-t}{1-\frac{\spectre v[i+1]}{\spectre v[i]}}\right)&\mbox{ if }v_{i+1} > v_i\label{eq-MacdoInd2}\\
    M_{v\Phi}=M_v\tau x_N&\mbox{ with }v\Phi = v_2 \cdot v_3 \cdots v_n \cdot (v_1+1).\label{eq-MacdoInd3}
\end{eqnarray}
In the sequel we denote $\affineop=\tau x_N$ and $\Yang \alpha i=T_i+\frac{1-t}{1-\alpha}$.
These inductive rules are established by proving that the right-hand sides are simultaneous eigenfunctions of the Dunkl--Cherednik operators.  
The precise eigenvalues, calculated from the relations among the generators, determine which Macdonald polynomial appears on the left-hand side.

Rules \refeq{eq-MacdoInd1}, \refeq{eq-MacdoInd2} and \refeq{eq-MacdoInd3} make it possible to apply an algorithm for computing Macdonald polynomials based on simple combinatorial considerations. 
Indeed, it suffices to construct a sequence of actions involving either transpositions $s_i$ applied to increasing parts or the operators~$\Phi$ 
and then apply the corresponding operator (resp. \refeq{eq-MacdoInd2} or \refeq{eq-MacdoInd3}) from the polynomial $1$. More precisely, we consider a directed graph whose vertices are the elements of $\mathbb{N}^N$, 
with edges of the form $v \xrightarrow[]{s_i} v s_i$ whenever $v_i+1 > v_{i+1}$, and 
$v \xrightarrow[]{\Phi} v\Phi$.  Classically, a path is a succession of arrow $v_1\xrightarrow[]{a_1}v_2\xrightarrow[]{a_2}\cdots\xrightarrow[]{a_{n-1}}v_n$ with $a_{i}\in\{s_i\mid 1\leq i\leq N-1\}\cup\{\Phi\}$ and we denote it by $v_1\xrightarrow{a_1a_2\cdots a_{n-1}}v_n$.
By following the edges of a path 
$
0^N \xrightarrow[]{\text{path}} v
$
and applying successively the operators that label the edges, we obtain an algorithm that computes any vector starting from the zero vector.  
This algorithm is straightforwardly confluent: the result depends only on the terminal vertex $v$, not on the chosen path leading to it. By replacing the affine edge $\Phi$ with $\affineopvec$, defined by $[a_1,\ldots,a_N]\affineopvec=[a_2,\ldots,a_N,qa_1]$, and starting from the initial state 
$
[t^{N-1},\, t^{N-2},\, \ldots,\, 1]$,
one computes the corresponding spectral vector. For instance, consider the vector $v=102$, it is obtained from $000$ following the path
\begin{equation}\label{eq-ex102}
000\xrightarrow[]{\Phi}001\xrightarrow[]{s_2}010
\xrightarrow[]{\Phi}101\xrightarrow[]{\Phi}012\xrightarrow[]{s_1}102
\end{equation}
its spectral vector is obtained by the following succession of operations
\begin{equation}
[t^2,t,1]\xrightarrow[]{\affineopvec}[t,1,qt^2]\xrightarrow[]{s_2}[t,qt^2,1]
\xrightarrow[]{\affineopvec}[qt^2,1,qt]\xrightarrow[]{\affineopvec}
[1,qt,q^2t^2]\xrightarrow[]{s_1}[qt,1,q^2t^2].
\end{equation}
From equalities \refeq{eq-MacdoInd1}, \refeq{eq-MacdoInd2}, \refeq{eq-MacdoInd3}, we can compute any polynomial $M_v$ from $M_{0^N}=1$ by following any path from $0^N$ to $v$ and by applying the operator $\affineop$ whenever we traverse an edge labeled by $\Phi$, and the operator 
$\Yang{\frac{\spectre v[i+1]}{\spectre v[i]}}i$ whenever we traverse an edge labeled by $s_i$.
For instance, we compute $M_{102}$ by following the path described in \refeq{eq-ex102} and by applying the succession of operators below:
\begin{equation}
1\xrightarrow[]{\affineop}M_{001}\xrightarrow[]{\Yang{qt^2}2}M_{010}
\xrightarrow[]{\affineop}M_{101}\xrightarrow[]{\affineop}M_{012}\xrightarrow[]{\Yang{qt}1}M_{102}
\end{equation}
Hence,
\begin{equation}
M_{102}=1\cdot\affineop\Yang{qt^2}2\affineop^2\Yang{qt}1 =
{\frac {{\it x_1}\,{{\it x_3}}^{2}}{q}}+{\frac { \left( t-1 \right) {\it x_3}\,{\it x_1}\,{\it x_2}}{tq-1}}+{\frac { \left( t-1 \right) {\it x_2}\,{{\it x_3}}^{2}}{q \left( tq-1 \right) }}
\end{equation}
Algorithmically, it is therefore enough to follow the computation on the three graphs simultaneously while traversing the same path.  
The confluence property implies that it suffices to exhibit \emph{any} path to perform the computation.  
For instance, in Figure~\ref{f-M102}, we see four possible paths that allow us to compute $M_{102}$.

\begin{figure}[ht]
\begin{center}
\tiny
\begin{tikzpicture}[scale=0.75]
\GraphInit[vstyle=Shade]
    \tikzstyle{VertexStyle}=[shape = rectangle,
draw
]
\SetUpEdge[lw = 1.5pt,
color = orange,
 labelcolor = gray!30,
 style={post}
]
\tikzset{LabelStyle/.style = {draw,
                                     fill = white,
                                     text = black}}
\tikzset{EdgeStyle/.style={post}}
\Vertex[x=0, y=8,
 L={$000$}]{x}
\Vertex[x=0, y=6,
 L={$001$}]{y}
 \Vertex[x=-2, y=4,
 L={$010$}]{z1}
 \Vertex[x=2, y=4,
 L={$011$}]{z2}
 \Vertex[x=0, y=2,
 L={$101$}]{t}
 \Vertex[x=-2, y=0,
 L={$110$}]{s1}
 \Vertex[x=2, y=0,
 L={$012$}]{s2}
 \Vertex[x=0, y=-2,
 L={$102$}]{u}
\Edge[label={$\Phi$}](x)(y)
\Edge[label={$s_2$}](y)(z1)
\Edge[label={$\Phi$}](y)(z2)
\Edge[label={$\Phi$}](z1)(t)
\Edge[label={$s_1$}](z2)(t)
\Edge[label={$s_2$}](t)(s1)
\Edge[label={$\Phi$}](t)(s2)
\Edge[label={$\Phi$}](s1)(u)
\Edge[label={$s_1$}](s2)(u)

\SetUpEdge[lw = 1.5pt,
color = green,
 labelcolor = gray!30,
 style={post}
]
\tikzset{LabelStyle/.style = {draw,
                                     fill = white,
                                     text = black}}
\Vertex[x=6, y=8,
 L={$[t^2,t,1]$}]{fx}
\Vertex[x=6, y=6,
 L={$[t,1,qt^2]$}]{fy}
 \Vertex[x=4, y=4,
 L={$[t,qt^2,1]$}]{fz1}
 \Vertex[x=8, y=4,
 L={$[1,qt^2,qt]$}]{fz2}
 \Vertex[x=6, y=2,
 L={$[qt^2,1,qt]$}]{ft}
 \Vertex[x=4, y=0,
 L={$[qt^2,qt,1]$}]{fs1}
 \Vertex[x=8, y=0,
 L={$[1,qt,q^2t^2]$}]{fs2}
 \Vertex[x=6, y=-2,
 L={$[qt,1,q^2t^2]$}]{fu}
\Edge[label={$\affineopvec$}](fx)(fy)
\Edge[label={$s_2$}](fy)(fz1)
\Edge[label={$\affineopvec$}](fy)(fz2)
\Edge[label={$\affineopvec$}](fz1)(ft)
\Edge[label={$s_1$}](fz2)(ft)
\Edge[label={$s_2$}](ft)(fs1)
\Edge[label={$\affineopvec$}](ft)(fs2)
\Edge[label={$\affineopvec$}](fs1)(fu)
\Edge[label={$s_1$}](fs2)(fu)

\SetUpEdge[lw = 1.5pt,
color = blue,
 labelcolor = gray!30,
 style={post}
]
\tikzset{LabelStyle/.style = {draw,
                                     fill = white,
                                     text = black}}

\Vertex[x=12, y=8,
 L={$M_{000}$}]{ffx}
\Vertex[x=12, y=6,
 L={$M_{001}$}]{ffy}
 \Vertex[x=10, y=4,
 L={$M_{010}$}]{ffz1}
 \Vertex[x=14, y=4,
 L={$M_{011}$}]{ffz2}
 \Vertex[x=12, y=2,
 L={$M_{101}$}]{fft}
 \Vertex[x=10, y=0,
 L={$M_{110}$}]{ffs1}
 \Vertex[x=14, y=0,
 L={$M_{012}$}]{ffs2}
 \Vertex[x=12, y=-2,
 L={$M_{102}$}]{ffu}
\Edge[label={$\affineop$}](ffx)(ffy)
\Edge[label={$\Yang{qt^2}2$}](ffy)(ffz1)
\Edge[label={$\affineop$}](ffy)(ffz2)
\Edge[label={$\affineop$}](ffz1)(fft)
\Edge[label={$\Yang{qt^2}2$}](ffz2)(fft)
\Edge[label={$\Yang{qt}2$}](fft)(ffs1)
\Edge[label={$\affineop$}](fft)(ffs2)
\Edge[label={$\affineop$}](ffs1)(ffu)
\Edge[label={$\Yang{qt}1$}](ffs2)(ffu)

\end{tikzpicture}
\end{center}
\caption{Computation of $M_{102}$. \label{f-M102}}
\end{figure}
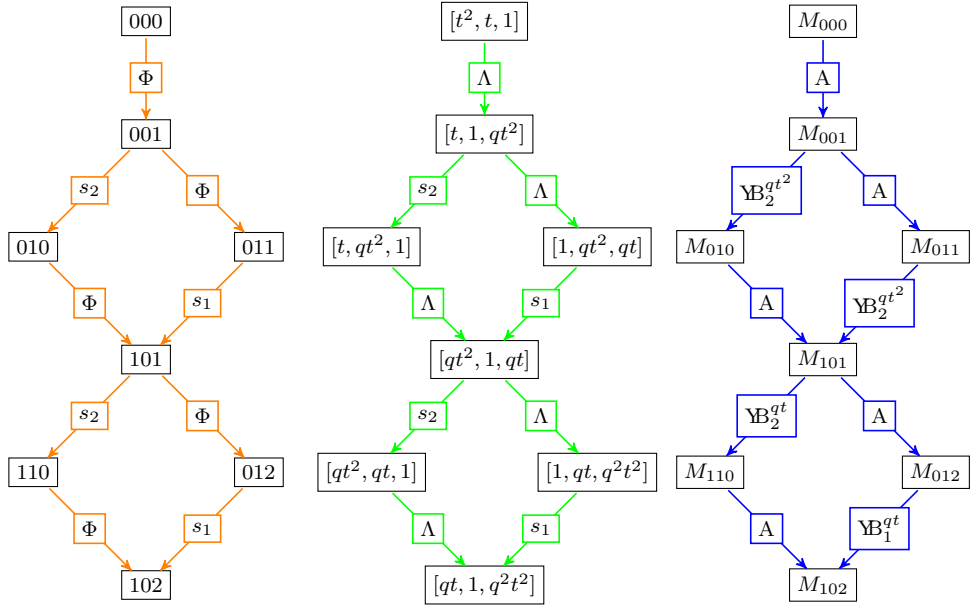
The underlying structure is known as the Yang--Baxter graph.\\ 
It is noteworthy that the order $\underline{\triangleright}$ is closely connected to the 
logical structure of the Yang--Baxter graph. Indeed, we have
\begin{equation}\label{eq-compTPhitriangle}
    vs_i\;\triangleright\; v\mbox{ if }v_i<v_{i+1},\mbox{
and }
    v\Phi\;\triangleright\; v,
\end{equation}
and this implies
\begin{equation}\label{eq-compYBtriangle}
u\;\underline\triangleleft\; v\mbox{ when }u\xrightarrow[]{\mathtt{path}}v.
\end{equation}
Furthermore, we have
\begin{equation}\label{eq-morphTi}
(v_i<v_{i+1}\mbox{ and }u\;\underline\triangleleft\; v)\mbox{ implies }us_i\;\underline\triangleleft\;vs_i,
\end{equation}
and
\begin{equation}\label{eq-morphPhi}
u\;\underline\triangleleft\; v\mbox{ implies }u\Phi\;\underline\triangleleft\;v\Phi,
\end{equation}
Hence, the property of the $\triangleright$-leading term of the Macdonald polynomials follows from the compatibility 
between the operators $T_i$, $\affineop$, with the order $\underline{\triangleright}$.

The Yang--Baxter graph turns out to be extremely useful for computing or proving various properties inductively.  
Let us illustrate this technique by sketching the proof that the leading coefficient of $M_v$ is  
$q^{-\frac12 \sum_{i=1}^{N} v_i (v_i - 1)}$.
The initialization of the induction is straightforward, since $M_{0^N}=1$.  
It remains to show that the property is invariant along the graph.  
That is, if $M_v$ satisfies the property and there is an edge 
$v \xrightarrow[]{*} v'$ in the Yang--Baxter graph, then $v'$ satisfies it as well.
There are two cases to consider.  
The first one corresponds to edges of the form 
$v \xrightarrow[]{s_i} v'=v \cdot s_i$ with $v_i < v_{i+1}$. When applying $\Yang{*}i$ the leading monomial becomes $x^{v'}$ and the leading coefficient does not change. Using the induction hypothesis, it is equals to $q^{-\frac12\sum_iv_i(v_i-1)}=q^{-\frac12\sum_iv'_i(v'_i-1)}$. The second one corresponds to edges of the form $v\xrightarrow[]{\Phi}v'=v\cdot\Phi$. When applying $\affineop$, the leading monomial becomes $x^{v'}$ and the leading coefficients is multiplicated by $q^{-v_1}=q^{1-(v\Phi)_N}$. So, from the induction hypothesis the leading coefficient of $M_{v}'$ is
\begin{equation}q^{-\frac12\sum_iv_i(v_i-1)-v_1}=q^{-\frac12\sum_{i\neq N} v'_i(v'_i-1)-\frac12(v'_N-1)(v'_N-2)-v'_N+1}=q^{-\frac12\sum_iv'_i(v'_i-1)}\end{equation}
Hence,
\begin{equation}
    M_v=q^{-\frac12\sum_iv_i(v_i-1)}x^v+\sum_{u\triangleleft v}\alpha_ux^u.
\end{equation}
\section{Preliminary results\label{sec-preminary}}
\subsection{On the Denominators of Macdonald Polynomials\label{subsec-denom}}

We are interested in the poles of Macdonald polynomials. In other words, we seek methods to compute the denominators of these polynomials; that is, we will write a Macdonald polynomial in the form of a reduced fraction
$
M_v = \frac{P_v}{\Den v}$,
where $P_v$ is a polynomial in the variables $\{x_1, \dots, x_N, q, t\}$, and $\Den v$ is a polynomial in the variables $q$ and $t$.
For instance, we have
\begin{equation}
    M_{102}={\frac {{\it x_3}\, \left( qt{\it x_1}\,{\it x_2}+qt{\it x_1}\,{\it x_3}-q{
\it x_1}\,{\it x_2}+t{\it x_2}\,{\it x_3}-{\it x_1}\,{\it x_3}-{\it x_2}\,{
\it x_3} \right) }{q \left( tq-1 \right) }}.
\end{equation}
So,
$
D_{102}={q \left( tq-1 \right) }$.\\
If $P$ et $Q$ are two polynomials, we denote by $\num{\frac PQ}$ (resp. $\den{\frac PQ}$) the numerator (resp. the denominator) of the reduced fraction of $\frac PQ$.
Hence, $\Den v=\den{M_v}$. Our aim is to determine whether a Macdonald polynomial remains well-defined after specializing the parameters $q$ and $t$. To achieve this goal, we focus on polynomials having a factor of the form $\num{\frac{\Den v}{\Den u}}$.\\ Indeed, suppose that we have an algorithm $\algo$ which takes as input a path $u \xrightarrow[]{\text{path}} v$ in the Yang-Baxter graph and returns as output a polynomial which is divisible by $\num{\frac{\Den v}{\Den u}}$. Then, if the polynomial $M_u$ remains well-defined after a certain specialization of the parameters $q$ and $t$, and if $\algo(\mathrm{path})$ does not vanish under the same specialization, the polynomial $M_v$ is also well-defined after this specialization.
We do not assume any specific property of the algorithm $\algo$; in particular, we do not require it to be confluent. That is, if
$
u \xrightarrow[]{\text{path}_1} v$ and $
u \xrightarrow[]{\text{path}_2} v$
are two paths from $u$ to $v$ in the Yang–Baxter graph, we allow for the possibility that
$\algo(\mathrm{path}_1) \neq \algo(\mathrm{path}_2)$.
Nevertheless, whatever the algorithm \(\algo\) may be, it allows us to deduce properties of the polynomials \(\num{\frac{\Den v}{\Den u}}\) following the two simple rules below:
\begin{itemize}
    \item{\bf Conjonction.} If we have $u \xrightarrow[]{\text{path}_1} v \xrightarrow[]{\text{path}_2} w$  then $\num{\frac {\Den w}{\Den u}}$ divides the polynomial $\algo(\mathrm{path}_1)\algo(\mathrm{path}_2)$.
    \item{\bf Disjonction.} If we have $u \xrightarrow[]{\text{path}_1} v$ and $u \xrightarrow[]{\text{path}_2} v$ then $\num{\frac {\Den v}{\Den u}}$ divides the polynomial $\gcd(\algo(\mathrm{path}_1),\algo(\mathrm{path}_2))$.
\end{itemize}
The simplest of these algorithms is the one we shall call \(\triv\), which consists in making appear all denominators induced by the Yang–Baxter graph. It is defined inductively by
\begin{equation}
\triv(\empty) = 1,
\end{equation}
\begin{equation}
\triv(u \xrightarrow[]{\mathrm{path}} v
\xrightarrow[]{\affineop} w) = 
q^{\frac12\sum_{i}(v_i-u_i)(v_i-u_i-1)}\triv(u \xrightarrow[]{\mathrm{path}} v),
\end{equation}
and
\begin{equation}
\triv(u \xrightarrow[]{\mathrm{path}} v
\xrightarrow[]{\Yang{q^at^b}i} w) = 
(1-q^at^b)\triv(u \xrightarrow[]{\mathrm{path}} v).
\end{equation}
The fact that $\num{\frac{\Den v}{\Den u}}$ follows straigthforwardly from the definition of the operators $f$ and $\Yang{q^at^b}i$.\\
At the opposite end of the spectrum lies the algorithm $\opt$, which returns the value $\num{\frac{\Den v}{\Den u}}$ (obtained by brute force), regardless of the path $u \rightarrow v$.\\
We  equip the set of all the considered algorithms with the partial order $\algo_1 \leq \algo_2$ if and only if, for every path $\mathfrak p$ in the Yang–Baxter graph, $\algo_1(\mathfrak p)$ divides $\algo_2(\mathfrak p)$.  
If we consider only the outputs of the algorithms (independently of the sequence of instructions used to obtain them), that is, if we view the algorithms as maps from the set of paths to $\mathbb{C}[q,t]$, then this partial order forms a lattice whose minimal element is $\opt$ and maximal element is $\triv$.
Indeed, the join $\algo_1 \vee \algo_2$ (resp. the meet $\algo_1 \wedge \algo_2$) of two algorithms $\algo_1$ and $\algo_2$ is defined, for any path $\mathfrak{p}$, by
$
(\algo_1 \vee \algo_2)(\mathfrak{p}) = \lcm\big(\algo_1(\mathfrak{p}),\, \algo_2(\mathfrak{p})\big)
$,
resp.
$
(\algo_1 \wedge \algo_2)(\mathfrak{p}) = \operatorname{gcd}\big(\algo_1(\mathfrak{p}),\, \algo_2(\mathfrak{p})\big).
$
More precisely, any algorithm we consider can be constructed from the algorithm $\triv$ by replacing the image of certain paths for which smaller denominators have been identified.  
The Disjunction Rule then allows us to choose the greatest common divisor $D$ of all denominators corresponding to the paths connecting a given pair of Macdonald polynomials.  
We can therefore replace the image of all these paths by $D$ and obtain a smaller algorithm. In this paper, we illustrate this strategy using simplifications based on symmetry properties.


\subsection{Specializations at \texorpdfstring{$q^a t^b = 1$}{q\^at\^b=1}\label{subsec-spec}}
We focus on specializations that cause Macdonald polynomials to degenerate, as these are closely connected to their poles. 
More precisely, we consider specializations satisfying $q^{a} t^{b} = 1$ for some $a,b \in \mathbb{N}$. The construction is as follows.  
If $M_u$ is a Macdonald polynomial, then
\begin{equation}
M_u = \frac{\num{M_u}}{\Den u},
\end{equation}
where $\num{M_u} \in \mathbb{C}[x_1, \dots, x_N, q, t]$ and $\Den u \in \mathbb{C}[q,t]$. \\
Specializing the Macdonald polynomial $M_u$ means canonically projecting $\num{M_u}$ onto the space\\
$\mathbb{C}[x_1, \dots, x_N, q, t]/_{q^{a}t^{b} - 1}$ (resp. $\Den u$ onto $\mathbb{C}[q,t]/_{q^{a}t^{b} - 1}$) in order to obtain a polynomial\\ $N_u(x_1, \dots, x_N; q, t)$ (resp. $D_u(q,t)$).  
When $D_u(q,t) \neq 0$, we denote
\begin{equation}
M_u(x_1, \dots, x_N; q^{a}t^{b}=1) = \frac{N_u(x_1, \dots, x_N; q, t)}{D_u(q,t)}.
\end{equation}
When the context allows, we shall omit the variables $x_1, \dots, x_N$ from the notation.  
If $D_u(q,t) = 0$, we say that the polynomial $M_u$ \emph{degenerates at} $q^{a}t^{b} = 1$.

From a practical point of view, if one wishes to implement the specialization in a computer algebra system, it may be convenient to proceed as follows.  
If $d = \gcd(a,b)$ and $\omega$ is a primitive \(a\)th root of unity such that $\omega^{\frac ad}$ is a primitive $d$-root, then we set
$q = u^{-\frac{b}{d}}$ and $t = \omega\, u^{\frac{a}{d}}$. 
This choice ensures that $q^{a}t^{b} = 1$, while preventing $q^{a'}t^{b'} = 1$ for any smaller integers $a' < a$ and $b' < b$. For instance, since $\Den{310}=q^3(1-q^2t)(1-qt)(1-q^3t^2)$, the polynomial $M_{310}$ degenerates at $qt=1$ but not at $q^2t^2=1$.\\
The next proposition will be particularly useful in manipulating the denominators $\Den u$.
\begin{prop}\label{prop-specomega}
Let $a,b$ be two positive integers and let
\begin{equation}
P(q,t)=\prod_i (1-q^{a_i}t^{b_i})
\end{equation}
be a bivariate polynomial. The following statements are equivalent:
\begin{enumerate}
\item $1-q^{a}t^{b}$ divides $P(q,t)$;
\item there exists an index $i$ such that $1-q^{a}t^{b}$ divides $1-q^{a_i}t^{b_i}$;
\item $P(\omega\,u^{-b/d},u^{a/d})=0$, where $d=\gcd(a,b)$ and $\omega$ is a primitive $a$-th root of unity such that $\omega^{a/d}$ is a primitive $d$-th root of unity (that is,
$
\omega=\exp\!\left(\frac{2\pi i(1+\alpha d)}{a}\right)$ for some $\alpha\in\mathbb Z$).
\end{enumerate}
\end{prop}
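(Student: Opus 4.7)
The plan is to establish the cyclic implications $(2) \Rightarrow (1) \Rightarrow (3) \Rightarrow (2)$, which together yield the equivalence. The implication $(2) \Rightarrow (1)$ is immediate: a divisor of a single factor divides the whole product. For $(1) \Rightarrow (3)$, the key is to verify that the claimed specialization actually lies on the hypersurface $q^{a}t^{b}=1$. Under $q = \omega\,u^{-b/d}$ and $t = u^{a/d}$, one computes
\begin{equation*}
q^{a}t^{b} = \omega^{a}\,u^{-ab/d + ab/d} = \omega^{a} = 1,
\end{equation*}
since $\omega$ is by hypothesis a primitive $a$-th root of unity. Hence $1 - q^{a}t^{b}$ vanishes identically under the substitution, and so does any multiple of it, in particular $P(q,t)$.

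The substantive work lies in $(3) \Rightarrow (2)$. The first point is that under the substitution, $P$ becomes an element of the Laurent polynomial ring $\mathbb{C}[u,u^{-1}]$, which is an integral domain; hence the assumption forces at least one factor to vanish identically in $u$. Writing this factor explicitly,
\begin{equation*}
1 - \omega^{a_i}\,u^{(a b_i - a_i b)/d} = 0 \quad \text{in } \mathbb{C}[u,u^{-1}],
\end{equation*}
splits into two independent conditions: the exponent of $u$ must vanish, i.e.\ $a\,b_i = a_i\,b$; and the constant coefficient must satisfy $\omega^{a_i} = 1$. Setting $a = d a'$ and $b = d b'$ with $\gcd(a',b')=1$, the first equation yields $a' b_i = a_i b'$, so $a_i = k a'$ and $b_i = k b'$ for some $k \in \mathbb{N}$. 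The second condition then reads $\omega^{k a'} = (\omega^{a/d})^{k} = 1$, and since $\omega^{a/d}$ is by construction a primitive $d$-th root of unity, this forces $d \mid k$. Writing $k = d\ell$ we obtain $(a_i,b_i) = \ell(a,b)$, whence $1 - q^{a_i}t^{b_i} = 1 - (q^{a}t^{b})^{\ell}$, which is divisible by $1 - q^{a}t^{b}$ through the elementary factorization $1 - x^{\ell} = (1-x)(1+x+\cdots+x^{\ell-1})$.

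The main obstacle is precisely the coordinated use of both arithmetic conditions in the last step. The exponent condition alone only yields the weaker statement that $(a_i,b_i)$ is an integer multiple of $(a/d, b/d)$, and would permit factors such as $1 - \zeta\,q^{a/d}t^{b/d}$ that are not divisible by $1 - q^{a}t^{b}$ when $d > 1$. It is the primitivity requirement on $\omega^{a/d}$ that rules out these spurious factors and promotes the conclusion to $(a_i,b_i) = \ell(a,b)$. This is also why the statement of the proposition requires the particular family of roots of unity $\omega = \exp(2\pi i(1 + \alpha d)/a)$ rather than an arbitrary primitive $a$-th root.
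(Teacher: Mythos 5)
Your proof is correct and follows essentially the same route as the paper's: the immediate implication $(2)\Rightarrow(1)$, the direct substitution check for $(1)\Rightarrow(3)$, and for $(3)\Rightarrow(2)$ the same decomposition into the exponent condition $ab_i=a_ib$ and the root-of-unity condition $\omega^{a_i}=1$, using coprimality and primitivity in tandem to force $(a_i,b_i)$ to be an integer multiple of $(a,b)$. Your explicit appeal to the integral domain structure of $\mathbb{C}[u,u^{-1}]$ (so that the vanishing of the product forces one factor to vanish) makes precise a step the paper leaves implicit, but the underlying argument is identical.
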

\begin{proof}
The implication $(2)\Rightarrow(1)$ is immediate.

To show $(1)\Rightarrow(3)$, assume that $1-q^{a}t^{b}$ divides $P(q,t)$. For any value of $u$ and any $\omega$ such that $\omega^{a}=1$, we have 
$1-q^{a}t^{b}=0$ when $(q,t)=(\omega u^{-b/d},u^{a/d})$. 
Since $1-q^{a}t^{b}$ is a factor of $P$, this implies
$
P(\omega\,u^{-b/d},u^{a/d})=0,
$
and in particular this holds for any primitive $\omega$ such that $\omega^{a/d}$ is a primitive $d$-th root of unity.

Now, let us show $(3)\Rightarrow(2)$. Write $d=\gcd(a,b)$ and set $a=d a'$, $b=d b'$ with $\gcd(a',b')=1$. Suppose that there exists such a root $\omega$ satisfying
$
P(\omega\,u^{-b/d},u^{a/d})=0.
$
Then
\begin{equation}
\prod_i\Bigl(1-\omega^{a_i}\,u^{\frac{-b a_i + a b_i}{d}}\Bigr)=0.
\end{equation}
This implies that there exists $i$ such that 
$
\frac{-b a_i + a b_i}{d}=0$ and $\omega^{a_i}=1$.
The first equality yields $a b_i = b a_i$. We write $a=d a'$ and $b=d b'$ with $\gcd(a',b')=1$ and, since $\gcd(a',b')=1$, we obtain there exist an integer $s$ such that 
$a_i = s a'$, $b_i = s b'$.
The condition $\omega^{a_i}=1$ is equivalent to $(\omega^{a'})^s=1$. Since $\omega^{a'}=\omega^{a/d}$ is by hypothesis a primitive $d$-th root of unity, we have $(\omega^{a'})^s=1$ if and only if $d\mid s$. Writing $s=d r$, we get $
a_i = r a$ and $b_i = r b$. Hence,
\begin{equation}
1-q^{a_i}t^{b_i} = 1-(q^{a}t^{b})^{r}=(1-q^at^b)\sum_{j=0}^{r-1}(q^at^b)^j,\end{equation}
so that $1-q^{a}t^{b}$ divides $1-q^{a_i}t^{b_i}$.\\
Hence the three statements are equivalent.
\end{proof}
Some non-degenerated Macdonald polynomials exhibit remarkable conjectural factorization properties.  
This is particularly the case for those lying in the kernels of the Dunkl operators after specialization; such polynomials are called \emph{singular}.  
It would be convenient to study these factorization formulas directly from the Yang–Baxter graph.  
Unfortunately, certain specializations may prevent us from following the Yang–Baxter graph consistently.

As an example, consider the polynomial $M_{3210}(qt^2 = 1)$.  
This polynomial is singular and has the following remarkable factorization
\begin{equation}
M_{3210}(qt^2 = 1)
= t^{8} (t x_4 - x_3)(t x_4 - x_2)(t x_3 - x_2)
  (t x_4 - x_1)(t x_3 - x_1)(t x_2 - x_1).
\end{equation}
To construct it, one may start from another polynomial that does not degenerate at $qt^2 = 1$, namely
\begin{equation}\begin{array}{l}
M_{2100}(qt^2 = 1)= \left( x_1-t{\it x_2} \right)\times\\\times  \left( {t}^{2}{{\it x_3}}^{2}+{t}^
{2}{\it x_3}\,{\it x_4}+{t}^{2}{{\it x_4}}^{2}-t{\it x_1}\,{\it x_3}-t{\it 
x_1}\,{\it x_4}-t{\it x_2}\,{\it x_3}-t{\it x_2}\,{\it x_4}+{\it x_1}\,{\it 
x_2} \right) {t}^{2},\end{array}
\end{equation}
and then follow the appropriate transformations in the Yang–Baxter graph. Although it is non-degenerate, every path leading to it passes through a degenerate polynomial.  
Indeed, all the paths that allow us to construct $M_{2100}$ end with one of the following three paths:
\begin{itemize}
    \item $1002\xrightarrow[]{s_3} 1020\xrightarrow[]{s_2} 1200\xrightarrow[]{s_1} 2100$,
    \item $0120\xrightarrow[]{s_1} 1020\xrightarrow[]{s_2} 1200\xrightarrow[]{s_1} M_{2100}$,
    \item or $2010\xrightarrow[]{s_2} M_{2100}$.
\end{itemize}
But $\Den{1002}=q(1-qt)(1-qt^2)$,
$\Den{0120}=q(1-qt^2)(1-q^2t^3)$, and $\Den{2010}=q(1-qt)(1-qt^2)(1-q^2t^2)$. So the polynomials $M_{1002}$, $M_{0120}$, and $M_{2010}$ degenerate at $qt^2=1$.\\
It is therefore particularly interesting to develop tools that make it easy to check whether a polynomial degenerates, and that allow us to \emph{jump} over degenerate polynomials in the Yang-Baxter graph. Subsection~\ref{subsec-Sym} is particularly important. It recalls, providing a proof based on the Yang--Baxter graph, that nonsymmetric Macdonald polynomials whose indices have consecutive equal parts are symmetric in the variables corresponding to these parts. This property will be crucial in what follows, as it explains why certain poles may disappear when one follows a path in the Yang-Baxter graph in order to generate a Macdonald polynomial.

\subsection{The Yang-Baxter graph, equal Consecutive Parts and Symmetry\label{subsec-Sym}}
The inductive rule~\refeq{eq-MacdoInd2} arises from the following more general property showing that the Yang–Baxter graph must be understood within a broader framework.
\begin{lem}\label{lem-nonaffineinduction}
Let $P$ be a polynomial.
\begin{enumerate}
    \item If $P$ is an eigenfunction of $\vhy_i$ with eigenvalue $\alpha$, then for any 
$\beta\in\mathbb{C}(q,t)$ and any index $j$ satisfying $|i-j|>1$, the polynomial 
$P\,\Yang{\beta}{j}$ is likewise an eigenfunction of $\vhy_i$ with the same 
eigenvalue~$\alpha$.
\item If $P$ is a simultaneous eigenfunction of $\vhy_i$ and $\vhy_{i+1}$ with respective eigenvalues $\alpha_i$ and $\alpha_{i+1}$ then the polynomial $P\Yang{\frac{\alpha_{i+1}}{\alpha_i}}i$ is likewise a simultaneous eigenfunction of $\vy_i$ and $\vy_{i+1}$ with respective eigenvalues $\alpha_{i+1}$ and $\alpha_i$.
\end{enumerate}
\end{lem}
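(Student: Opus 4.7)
The plan is to verify each statement by a direct computation that uses only the DAHA relations already recalled, together with the fact that $\Yang{\beta}{j}=T_j+\frac{1-t}{1-\beta}$ is an affine polynomial in $T_j$ with scalar coefficients. Since all operators act on the right, everything reduces to understanding how $T_j$ moves past $\vhy_i$.

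For the first part, the commutation relation \refeq{eq-hetcomm} gives $T_j\vy_i=\vy_iT_j$ as soon as $|i-j|>1$, and the renormalization $\vhy_i=t^{i-1}\vy_i$ does not affect this. Therefore $\Yang{\beta}{j}$ and $\vhy_i$ commute as operators, and one computes $(P\,\Yang{\beta}{j})\,\vhy_i=P\,\vhy_i\,\Yang{\beta}{j}=\alpha\,P\,\Yang{\beta}{j}$ in a single line.

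For the second part, I would first extract from the dual Bernstein--Lusztig relation \refeq{eq-dBLhat} two \emph{exchange formulas} that let me push $T_i$ past $\vhy_i$ and $\vhy_{i+1}$. The quadratic relation \refeq{eq-quad} gives $T_i^{-1}=\frac{1}{t}(T_i-(t-1))$, and then left- and right-multiplication of $T_i\vhy_{i+1}T_i=t\vhy_i$ by $T_i^{-1}$ yields
\begin{equation}
T_i\vhy_i=\vhy_{i+1}T_i+(t-1)\vhy_i,\qquad T_i\vhy_{i+1}=\vhy_iT_i-(t-1)\vhy_i.
\end{equation}
Setting $\gamma=\alpha_{i+1}/\alpha_i$, I then expand $P\,\Yang{\gamma}{i}\,\vhy_i$ using the first exchange formula together with $P\vhy_i=\alpha_iP$ and $P\vhy_{i+1}=\alpha_{i+1}P$; the scalar coefficient of $P$ simplifies via the elementary identity $(t-1)+\frac{1-t}{1-\gamma}=\frac{(1-t)\gamma}{1-\gamma}$ combined with $\alpha_i\gamma=\alpha_{i+1}$, and one obtains $\alpha_{i+1}\,P\,\Yang{\gamma}{i}$. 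The analogous expansion of $P\,\Yang{\gamma}{i}\,\vhy_{i+1}$ with the second exchange formula produces $\alpha_i\,P\,\Yang{\gamma}{i}$, which is exactly the claimed swap of eigenvalues.

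The only delicate point is to set up the two exchange formulas correctly: the factor of $t$ introduced by the renormalization $\vhy_i=t^{i-1}\vy_i$ makes them slightly asymmetric, and a sign error there would cascade through both eigenvalue computations. Once these formulas are in hand, both parts of the lemma reduce to routine algebra that I would carry out explicitly in the full proof.
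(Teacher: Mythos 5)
Your proof is correct and takes essentially the same route as the paper's: part (1) is exactly the commutation \refeq{eq-hetcomm} pushed through the scalar renormalization $\vhy_i=t^{i-1}\vy_i$, and part (2) is a direct expansion via the dual Bernstein--Lusztig relation \refeq{eq-dBLhat} and the quadratic relation \refeq{eq-quad}, with the eigenvalue swap falling out of the identity $(t-1)+\tfrac{1-t}{1-\gamma}=\tfrac{(1-t)\gamma}{1-\gamma}$. Packaging the two consequences of \refeq{eq-dBLhat} as explicit exchange formulas $T_i\vhy_i=\vhy_{i+1}T_i+(t-1)\vhy_i$ and $T_i\vhy_{i+1}=\vhy_iT_i-(t-1)\vhy_i$ is a clean reorganization of what the paper does inline, but the computation is the same.
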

\begin{proof}
\begin{enumerate}
    \item It is a direct consequence of the commutation rules \refeq{eq-hetcomm}.
    \item One has
    \begin{equation}
    \begin{array}{rcl}
    P\Yang{\frac{\alpha_{i+1}}{\alpha_i}}i\vhy_i&=& 
    P\left(T_i\vhy_i+\frac{1-t}{1-\frac{\alpha_{i+1}}{\alpha_i}}\vhy_i\right)\\
    &=&P\left(\frac1tT_i^2\vhy_{i+1}T_i+\frac{1-t}{1-\frac{\alpha_{i+1}}{\alpha_i}}\vhy_i\right)\\
    &=&P\left(\frac1t((t-1)T_i+t)\vhy_{i+1}T_i+\frac{1-t}{1-\frac{\alpha_{i+1}}{\alpha_i}}\vhy_i\right)\\
    &=&P\left(\vhy_{i+1}T_i+\frac{\alpha_{i+1}}{\alpha_i}\frac{1-t}{1-\frac{\alpha_{i+1}}{\alpha_i}}\vhy_i\right)\\
    &=&P\left(\alpha_{i+1}T_i+\alpha_{i+1}\frac{1-t}{1-\frac{\alpha_{i+1}}{\alpha_i}}\right)=\alpha_{i+1}P\Yang{\frac{\alpha_{i+1}}{\alpha_i}}i,
    \end{array}
    \end{equation}
    and
    \begin{equation}
        \begin{array}{rcl}
         P\Yang{\frac{\alpha_{i+1}}{\alpha_i}}i\vhy_{i+1}&=& P\left(T_i\vhy_{i+1}+\frac{1-t}{1-\frac{\alpha_{i+1}}{\alpha_i}}\vhy_{i+1}\right)\\
         &=&P\left(t\vhy_iT_i^{-1}+\frac{1-t}{1-\frac{\alpha_{i+1}}{\alpha_i}}\vhy_{i+1}\right)\\
         &=&P\left(\vhy_i(T_i+(1-t))+\frac{1-t}{1-\frac{\alpha_{i+1}}{\alpha_i}}\vhy_{i+1}\right)\\
         &=&P\left(\alpha_iT_i+(1-t)\left(\alpha_i+\frac{\alpha_{i+1}}{1-\frac{\alpha_{i+1}}{\alpha_i}}\right)\right)\\
         &=&\alpha_iP\Yang{\frac{\alpha_{i+1}}{\alpha_i}}i.
        \end{array}
    \end{equation}
%
\end{enumerate}
\end{proof}

This property, which is relatively simple compared with the rest of Cherednik’s theory, is truly fundamental. Indeed, it makes it possible to define other representations of the DAHA, for instance on vectorially graded polynomials. It is also very convenient for establishing, in an elegant manner, certain properties of Macdonald polynomials. To illustrate these remarks, we prove below the symmetry property of Macdonald polynomials with consecutive equal parts. This property is regarded as folklore and can be proved in many different ways. Nevertheless, it is particularly straightforward to establish when viewed through the Yang–Baxter graph. It will also play a central role in our later discussions.
\begin{prop}\label{p-symM}
Let $v\in\mathbb N^N$ such that $v[i]=v[i+1]$ for some $i\in\{1,\dots,N\}$. The polynomial $M_v$ is symmetric in $\{x_i,x_{i+1}\}$. 
\end{prop}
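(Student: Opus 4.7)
The plan is to invoke Claim~\ref{claim-kerT_i-t}, which reduces the symmetry $M_v s_i = M_v$ to the identity $M_v(T_i - t) = 0$. Writing $\alpha_j := \spectre{v}[j]$, the hypothesis $v_i = v_{i+1}$ combined with formula \refeq{eq-valhspec} shows that the $q$-exponents cancel in the ratio $\alpha_i/\alpha_{i+1}$, so this ratio is a pure power of $t$. Since $M_v$ is a nonzero simultaneous eigenfunction of $\vhy_1, \ldots, \vhy_N$, Claim~\ref{claim-spectre} forces this power to be exactly $t^1$, i.e.\ $\alpha_{i+1}/\alpha_i = 1/t$.

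With this value in hand I would specialize the intertwiner of Lemma~\ref{lem-nonaffineinduction}(2) and compute
\begin{equation}
\Yang{1/t}{i} = T_i + \frac{1-t}{1 - 1/t} = T_i - t,
\end{equation}
so that $M_v(T_i - t) = M_v\,\Yang{1/t}{i}$. By part~(2) of the lemma, this polynomial is an eigenfunction of $\vhy_i$ and $\vhy_{i+1}$ with the eigenvalues swapped to $\alpha_{i+1}$ and $\alpha_i$; by part~(1) of the same lemma applied to each $j$ with $|j - i| > 1$, it remains an eigenfunction of the other $\vhy_j$ with the same eigenvalue as $M_v$. Together this makes $M_v(T_i - t)$ a simultaneous eigenfunction of all of $\vhy_1, \ldots, \vhy_N$.

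The finishing move applies Claim~\ref{claim-spectre} a second time, now to $M_v(T_i - t)$. The ratio of its $\vhy_i$- and $\vhy_{i+1}$-eigenvalues has become $\alpha_{i+1}/\alpha_i = t^{-1}$; if $M_v(T_i - t)$ were nonzero, the claim would demand that this exponent equal $1$, which is absurd. Hence $M_v(T_i - t) = 0$, and Claim~\ref{claim-kerT_i-t} yields the desired symmetry. The main conceptual point I would want to check carefully is this dual use of Claim~\ref{claim-spectre} --- first to pin down the spectral ratio of $M_v$, then to forbid any nonzero output of the intertwiner. Both applications require that the polynomial in question is a simultaneous eigenfunction of every $\vhy_k$, which in the second step is precisely what part~(1) of Lemma~\ref{lem-nonaffineinduction} contributes.
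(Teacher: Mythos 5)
Your proof is correct and follows the same route as the paper: reduce to showing $M_v(T_i-t)=0$ via Claim~\ref{claim-kerT_i-t}, identify $T_i-t$ with $\Yang{1/t}{i}$ using Claim~\ref{claim-spectre}, apply Lemma~\ref{lem-nonaffineinduction} to show $M_v\Yang{1/t}{i}$ would be a simultaneous eigenfunction with eigenvalue ratio $t^{-1}$ at position $i$, and invoke Claim~\ref{claim-spectre} once more to force it to vanish. The only cosmetic difference is that you spell out the intermediate step that $\alpha_i/\alpha_{i+1}$ is a pure power of $t$ before applying Claim~\ref{claim-spectre}, and explicitly separate the uses of parts (1) and (2) of the lemma, which the paper leaves implicit.
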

\begin{proof}
From Claim \ref{claim-kerT_i-t}, it suffices to prove that $M_v$ is in the kernel of $T_i-t$. Since $v_i=v_{i+1}$, Claim \ref{claim-spectre} implies $t\spectre v[i+1]=\spectre v[i]$ and then $T_i-t=\Yang{\frac{\spectre v[i+1]}{\spectre v[i]}}i$. Applying Lemma \ref{lem-nonaffineinduction} we obtain that the polynomial $M_v(T_i-1)=M_v\Yang{\frac{\spectre v[i+1]}{\spectre v[i]}}i$ is a simultaneous eigenfunction of the operators $\vhy_i$'s whose spectrum is $\spectre v\cdot s_i$. Hence, we have
$\frac{M_v(T_i-t)\vhy_i}{M_v(T_i-t)\vhy_{i+1}}=t^{-1}$. This implies, by Claim \ref{claim-spectre}, that $M_v(T_i-t)=0$ and so $M_v$ is symmetrical in the variables $\{x_i,x_{i+1}\}$.
\end{proof}


To conclude this subsection, we observe that a polynomial lies in the kernel of $T_i - t$ if and only if it lies in the kernel of $\Yang\alpha i - \frac{1 - t\alpha}{1 - \alpha}$ for $\alpha \notin \{1, t^{-1}\}$. Consequently, if $P$ is a polynomial symmetric in the variables $x_i$ and $x_{i+1}$, then
\begin{equation}\label{eq-YangSym}
P\Yang{\alpha}i=\frac{1-t\alpha}{1-\alpha}P
\end{equation}
This property will be extensively exploited to compute the factors of the denominators of Macdonald polynomials.

\section{Jumping lemmas\label{sec-jump}}
We show that a jump over a block of identical values in the Yang-Baxter graph adds at most one pole to the Macdonald polynomial. Let us illustrate what happens with an example. Consider the polynomial $M_{022230}$. We have $\Den{022230}=q^6(1-q^3t^5)(1-q^2t^4)(1-q^2t^2)(1-qt)$. Acting by $\Yang{qt^3}4$ gives
\begin{equation}\label{eq-M022320}
M_{022320}=M_{022230}\Yang{qt^3}4=
M_{022230}T_4+\frac{1-t}{1-qt^3}M_{022230},
\end{equation}
and
$\Den{022320}=(1-qt^3)\Den{022230}$. The extra denominator $1 - q t^{3}$ arises from the second term on the right-hand side of Equality \refeq{eq-M022320}. To obtain $M_{023220}$, we act by $\Yang{qt^2}3$
\begin{equation}
M_{023220}=M_{022320}\Yang{qt^2}3,
\end{equation}
and we observe that $\Den{023220}=\frac{1-qt^2}{1-qt^3}\Den{022320}$. As previously, the presence of the factor $1-qt^2$ straightforwardly comes from the expression of $\Yang{qt^2}3$. The fact that the factor $1-q t^{3}$ cancels out follows from the equality
\begin{equation}\label{eq-M023220}
M_{023200}=M_{022230}T_4\Yang{qt^2}3 + \frac{1-t}{1-qt^3}M_{022230}\Yang{qt^2}3.
\end{equation}
Indeed, since by Proposition \ref{p-symM} the polynomial $M_{022230}$ is symmetric in $x_3$ and $x_4$, Equality \refeq{eq-YangSym} gives
\begin{equation}
M_{022230}\Yang{qt^2}3=\frac{1-qt^2}{1-qt^3}M_{022230}.
\end{equation}
Similarly $\Den{032220}=\frac{1-qt}{1-qt^2}\Den{023220}$. And finally, $\Den{032220}=(1-qt)\Den{022230}$.\\
The reader should be aware that jumping over a block does not necessarily multiply the denominator by a factor of the form $1-q^{a} t^{b}$, but in general by a fraction whose numerator is $1-q^{a} t^{b}$. This becomes quite clear when one looks at the steps of the process described above. For instance, we have
\begin{equation}
\Den{032220}=\frac{1-qt}{1-qt^3}\Den{022230}.
\end{equation}
The same phenomenon also occurs in cases where the explanation is less straightforward. For instance:
\begin{equation}
\Den{021110}=\frac{1-qt}{1-qt^4}\Den{011120}.
\end{equation}
The formulation of the next result takes this  subtlety into consideration.
\begin{lem}[Jumping Lemma]\label{l-jl}
For any $b>a\geq 0$, $u'\in\mathbb N^m$, $u''\in \mathbb N^n$, and $k>0$, the polynomial $\num{\frac{\Den{u'ba^ku''}}{\Den{u'a^kbu''}}}$  divides $1-q^{b-a}t^{\beta-\alpha-k+1}$ where $\spectre{u'a^kbu''}[m+k]=q^at^{\alpha}$ and
$\spectre{u'a^kbu''}[m+k+1]=q^bt^{\beta}$.
\end{lem}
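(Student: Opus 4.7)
The plan is to follow the explicit path in the Yang--Baxter graph given by the $k$ elementary swaps
\begin{equation*}
v_0:=u'a^k b u''\;\xrightarrow{s_{m+k}}\;v_1\;\xrightarrow{s_{m+k-1}}\;\cdots\;\xrightarrow{s_{m+1}}\;v_k=u'ba^k u'',
\end{equation*}
where $v_i := u' a^{k-i} b a^i u''$, and to track the denominators of the intermediate polynomials $M_{v_i}$ far more carefully than the trivial algorithm $\triv$ would. Every edge is well defined because $a<b$, and step $i$ corresponds to applying $\Yang{\gamma_i}{m+k-i+1}$ for some ratio $\gamma_i\in\mathbb{C}(q,t)$.

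I would first compute these ratios explicitly. A direct count using the standardization formula~\refeq{eq-valhspec} shows that $\spectre{v_{i-1}}[m+k-i+2]=q^b t^{\beta}$ stays invariant along the path, while the neighbouring $a$-position satisfies $\spectre{v_{i-1}}[m+k-i+1]=q^a t^{\alpha+i-1}$, so that $\gamma_i = q^{b-a} t^{\beta-\alpha-i+1}$. The arithmetic identity that will drive every cancellation is $t\,\gamma_i=\gamma_{i-1}$, equivalently $\tfrac{1-t\gamma_i}{1-\gamma_i}=\tfrac{1-\gamma_{i-1}}{1-\gamma_i}$.

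The heart of the proof will be the inductive closed form
\begin{equation*}
M_{v_i} = M_{v_0}\, T_{m+k}T_{m+k-1}\cdots T_{m+k-i+1} + \frac{1-t}{1-\gamma_i}\, M_{v_0}\, S_i,
\end{equation*}
where $S_i := \sum_{j=0}^{i-1} T_{m+k}T_{m+k-1}\cdots T_{m+k-j+1}$ with the empty product equal to $1$. The base case $i=1$ is immediate from $\Yang{\gamma_1}{m+k}=T_{m+k}+\tfrac{1-t}{1-\gamma_1}$. For the inductive step, I would apply $\Yang{\gamma_i}{m+k-i+1}$ on the right of the formula at stage $i-1$, and rely on two facts: by Proposition~\ref{p-symM} together with Claim~\ref{claim-kerT_i-t}, $M_{v_0}$ satisfies $M_{v_0}\,T_j=t\,M_{v_0}$ for every $j\in\{m+1,\dots,m+k-1\}$, since $v_0$ carries the block $a^k$ at positions $m+1,\dots,m+k$; and every $T_\ell$ inside $S_{i-1}$ has index $\ell\geq m+k-i+3$, so it commutes with $T_{m+k-i+1}$ by~\refeq{eq-com}. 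Combining these with $t\gamma_i=\gamma_{i-1}$, the four terms arising from the expansion recombine into precisely the predicted shape, and the spurious factor $1-\gamma_{i-1}$ disappears at every step.

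Setting $i=k$ then exhibits $(1-\gamma_k)\,\Den{v_0}$ as a common denominator of $M_{u'ba^k u''}$, so that $\Den{u'ba^k u''}$ divides $(1-q^{b-a}t^{\beta-\alpha-k+1})\,\Den{u'a^k b u''}$, which is exactly the claimed divisibility for $\num{\frac{\Den{u'ba^k u''}}{\Den{u'a^k b u''}}}$. I expect the main obstacle to be the bookkeeping in the inductive step: the recombination of the four summands into the right $S_i$-shape relies crucially on the symmetry of $M_{v_0}$ in the entire block $x_{m+1},\dots,x_{m+k}$ (not merely in a consecutive pair), together with the precise algebraic cancellation provided by $t\gamma_i=\gamma_{i-1}$.
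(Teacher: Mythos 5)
Your proof is correct, and it takes a genuinely different route from the paper's. The paper establishes the Jumping Lemma by induction on $k$ together with the \emph{Disjonction} rule: at each inductive step it compares the denominators produced along two different decompositions of the jump, observes that $\gcd\bigl(1-q^{b-a}t^{\beta-\alpha-k+2},\,1-q^{b-a}t^{\beta-\alpha-k+3}\bigr)=1$ (since $b>a$), and concludes that the spurious factor cannot appear. You instead fix a single path and derive an explicit closed form
\begin{equation*}
M_{v_i}=M_{v_0}\,T_{m+k}\cdots T_{m+k-i+1}+\frac{1-t}{1-\gamma_i}\,M_{v_0}\,S_i,
\end{equation*}
exploiting $M_{v_0}T_j=tM_{v_0}$ on the whole block and $t\gamma_i=\gamma_{i-1}$ to absorb the intermediate factors; setting $i=k$ exhibits $(1-\gamma_k)\Den{v_0}$ as a common denominator and the divisibility follows at once. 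This is essentially the content of the paper's Proposition~\ref{prop-elemjumps}, which the paper proves \emph{after} the Jumping Lemma by a separate induction of the same flavour. So you are in effect proving the explicit operator formula and the denominator bound in one pass: your approach is more direct and computational and bundles two of the paper's results together, while the paper's argument is more in the spirit of its ``algorithm'' lattice formalism, deducing the denominator bound abstractly from the comparison of paths without producing the closed form. Both are valid; yours is arguably the cleaner derivation of the bound, at the cost of slightly heavier bookkeeping in the inductive step (which you handle correctly).
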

\begin{proof}
The result is proved by induction on $k$. In the case $k = 1$, the assertion is immediate, following directly from the rule derived from the Yang--Baxter graph through the action of the operator $\Yang{q^{a-b}t^{\alpha-\beta}}{m+1}$.\\
Now we assume that $k>1$. Using the induction hypothesis, we obtain that
$\num{\frac{\Den{u'aba^{k-1}u''}}{\Den{u'a^kbu''}}}$ divides $1-q^{b-a}t^{\beta-\alpha-k+2}$.
Furthermore
\begin{equation}\label{eq-slr-step-1}\begin{array}{rcl}
M_{u'aba^{k-1}u''}&=&M_{u'a^2ba^{k-2}u''}\Yang{q^{b-a}t^{\beta-\alpha-k+2}}{m+2}\\&=&M_{u'a^2ba^{k-2}u''}T_i+\frac{1-t}{1-q^{b-a}t^{\beta-\alpha-k+2}}M_{u'a^2ba^{k-2}u''},
\end{array}
\end{equation}
because
\begin{equation}
\left(\spectre{u'a^2ba^{k-2}u''}[m+1],\spectre{u'a^2ba^{k-2}u''}[m+2]\right)=
\left(q^at^{\alpha+k-2},q^bt^\beta\right).
\end{equation}
Again by induction, the polynomial $\num{\frac{\Den{u'a^2ba^{k-2}u''}}{\Den{u'a^{k}bu''}}}$ does not divide $1-q^{b-a}t^{\beta-\alpha-k+2}$ (because it divides $1-q^{b-a}t^{\beta-\alpha-k+3}$).  It follows that the denominator factor $1 - q^{b-a} t^{\beta-\alpha-k+2}$ of 
$M_{u'aba^{k-1}u''}$ appears exclusively in the term
\begin{equation}
\frac{1-t}{1 - q^{b-a} t^{\beta-\alpha-k+2}}\, M_{u'a^2ba^{k-2}u''}
\end{equation}
on the right-hand side of Equality~\ref{eq-slr-step-1}.
Since, by Proposition \ref{p-symM}, the Macdonald polynomial  $M_{u'a^2ba^{k-2}u''}$ is symmetric in the variables $x_{k+1}$ and $x_{k+2}$ one has
\begin{equation}\label{eq-u'a^2ba^(k-2)u''}
M_{u'a^2ba^{k-2}u''}\Yang{q^{b-a}t^{\beta -\alpha-k+1}}{m+1}=\frac{1-q^{b-a}t^{\beta -\alpha-k+2}}{1-q^{b-a}t^{\beta -\alpha-k+1}}.
\end{equation}
Combining Equality \refeq{eq-slr-step-1} and Equality \refeq{eq-u'a^2ba^(k-2)u''}, we obtain
\begin{equation}\begin{array}{rcl}
M_{u'ba^ku''}&=&M_{u'abu^{k-1}u''}\Yang{q^{b-a}t^{\beta-\alpha+k+1}}{m+1}\\
&=&
M_{u'a^2ba^{k-2}u''}T_i\Yang{q^{b-a}t^{\beta-\alpha-k+1}}{m+1}\\&&+\frac{(1-t)^2}{1-q^{b-a}t^{\beta-\alpha-k+1}}M_{u'a^2ba^{k-2}u''}.\end{array}
\end{equation}
We deduce that  $\num{\frac{\Den{u'baku''}}{\Den{u'a^kbu''}}}$ divides $(1-q^{b-a}t^{\beta-\alpha-k+1})(1-q^{b-a}t^{\beta-\alpha-k+3})$. But from the Yang-Baxter graphs it also divides  $(1-q^{b-a}t^{\beta-\alpha-k+1})(1-q^{b-a}t^{\beta-\alpha-k+2})$ because $M_{u'ba^ku''}=M_{u'aba^{k-1}u''}\Yang{q^{b-a}t^{\beta-\alpha-k+1}}{m+1}$. Hence, by the Disjonction rule, it divides \begin{equation}(1-q^{b-a}t^{\beta-\alpha-k+1})\gcd\left(1-q^{b-a}t^{\beta-\alpha-k+2},1-q^{b-a}t^{\beta-\alpha-k+3}\right)=1-q^{b-a}t^{\beta-\alpha-k+1}.\end{equation}
Indeed, since $\left(1-q^{b-a}t^{\beta-\alpha-k+2}\right)t-\left(1-q^{b-a}t^{\beta-\alpha-k+3}\right)=t-1$, any polynomials which divides both $1-q^{b-a}t^{\beta-\alpha-k+2}$ and $1-q^{b-a}t^{\beta-\alpha-k+3}$ also divides $1-t$. Nevertheless, $1 - t$ divides $1 - q^{c} t^{d}$ only when $1 - q^{c} = 0$; as $q$ is  formal, this condition forces $c = 0$. In our case, we have $c=b-a>0$. Hence, $\num{\frac{\Den{u'baku''}}{\Den{u'a^kbu''}}}$ divides $1-q^{b-a}t^{\beta-\alpha-k+1}$ as expected.
\end{proof}
Notice that this is just a consequence of the action of the Yang-Baxter operator on symmetric polynomials. Hence, one can apply almost the same reasoning to another configuration and find the following result whose proof is very similar to those of Lemma \ref{l-jl} and left to the reader.
\begin{lem}[Dual jumping Lemma]\label{l-djl}
For any $b>a\geq 0$, $u'\in\mathbb N^m$, $u''\in \mathbb N^n$, and $k>0$ the polynomial $\num{\frac{\Den{u'b^kau''}}{\Den{u'ab^ku''}}}$  divides $1-q^{b-a}t^{\beta-\alpha-k+1}$ where $\spectre{u'ab^ku''}[m+1]=q^at^{\alpha}$ and
$\spectre{u'ab^ku''}[m+2]=q^bt^{\beta}$.
\end{lem}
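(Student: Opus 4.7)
The plan is to mirror the proof of Lemma~\ref{l-jl}, reflecting every index about the middle of the moving block: the $a$ now migrates rightward past the $k$ copies of $b$, but the Yang--Baxter combinatorics and the role of Proposition~\ref{p-symM} transpose identically. I would induct on $k$. For the base case $k=1$, the single edge $u'abu''\xrightarrow[]{s_{m+1}}u'bau''$ contributes precisely the Yang--Baxter factor $\Yang{q^{b-a}t^{\beta-\alpha}}{m+1}$, whose denominator $1-q^{b-a}t^{\beta-\alpha}$ equals $1-q^{b-a}t^{\beta-\alpha-k+1}$ for $k=1$.

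For the inductive step ($k>1$), I would exhibit two distinct paths from $u'ab^{k}u''$ to $u'b^{k}au''$ and combine them by the Disjunction rule. Applying the induction hypothesis with $U'=u'$, $U''=bu''$ (shorter $b$-block of length $k-1$) yields that $\num{\frac{\Den{u'b^{k-1}abu''}}{\Den{u'ab^{k}u''}}}$ divides $1-q^{b-a}t^{\beta-\alpha-k+2}$. Composing with the terminal swap $\Yang{q^{b-a}t^{\beta-\alpha-k+1}}{m+k}$ and invoking the Conjunction rule produces the first bound
\begin{equation*}
(1-q^{b-a}t^{\beta-\alpha-k+2})(1-q^{b-a}t^{\beta-\alpha-k+1}).
\end{equation*}

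For the second bound I would factor through $u'b^{k-2}ab^{2}u''$. The induction hypothesis with $U'=u'$, $U''=b^{2}u''$ gives that $\num{\frac{\Den{u'b^{k-2}ab^{2}u''}}{\Den{u'ab^{k}u''}}}$ divides $1-q^{b-a}t^{\beta-\alpha-k+3}$. I would then expand
\begin{equation*}
M_{u'b^{k-1}abu''}=M_{u'b^{k-2}ab^{2}u''}T_{m+k-1}+\frac{1-t}{1-q^{b-a}t^{\beta-\alpha-k+2}}M_{u'b^{k-2}ab^{2}u''}
\end{equation*}
and compose with $\Yang{q^{b-a}t^{\beta-\alpha-k+1}}{m+k}$. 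The critical observation, exactly as in the proof of Lemma~\ref{l-jl}, is that $M_{u'b^{k-2}ab^{2}u''}$ is symmetric in $\{x_{m+k},x_{m+k+1}\}$ by Proposition~\ref{p-symM} (the two consecutive $b$'s at these positions), so \refeq{eq-YangSym} forces the identity
\begin{equation*}
M_{u'b^{k-2}ab^{2}u''}\Yang{q^{b-a}t^{\beta-\alpha-k+1}}{m+k}=\frac{1-q^{b-a}t^{\beta-\alpha-k+2}}{1-q^{b-a}t^{\beta-\alpha-k+1}}M_{u'b^{k-2}ab^{2}u''}.
\end{equation*}
The factor $1-q^{b-a}t^{\beta-\alpha-k+2}$ then cancels the matching denominator inherited from the $\frac{1-t}{1-q^{b-a}t^{\beta-\alpha-k+2}}$ term, so this route yields the second bound $(1-q^{b-a}t^{\beta-\alpha-k+3})(1-q^{b-a}t^{\beta-\alpha-k+1})$.

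The Disjunction rule then delivers the conclusion: since $\gcd(1-q^{b-a}t^{\beta-\alpha-k+2},\,1-q^{b-a}t^{\beta-\alpha-k+3})=1$ (their difference is a unit multiple of $1-t$, which cannot divide any $1-q^{c}t^{d}$ with $c=b-a>0$, by the coprimality argument closing the proof of Lemma~\ref{l-jl}), the common divisor of the two bounds collapses to $1-q^{b-a}t^{\beta-\alpha-k+1}$. The main obstacle in transcribing the argument is the bookkeeping of standardizations: here the surviving pole originates from the rightmost swap at position $m+k$ rather than the leftmost, so one must verify that within $u'b^{k-2}ab^{2}u''$ the $\std$-values of the $k$ copies of $b$ still form a consecutive run despite being split by a single $a$ (whose strictly smaller value prevents it from interlacing the $b$-block in $\std$-order), in order to confirm that the spectrum at position $m+k$ equals $q^{b}t^{\beta-k+2}$ and that the symmetry-induced cancellation operates exactly as in the original proof.
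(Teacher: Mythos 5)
Your argument is correct and is exactly the dual of the paper's proof of Lemma~\ref{l-jl}, which is what the authors intend when they say the proof is ``very similar'' and leave it to the reader: the same two routes (induction plus a final elementary edge on one side, induction plus the symmetry-based cancellation via Proposition~\ref{p-symM} and \refeq{eq-YangSym} on the other) combined with the Disjunction rule, with the roles of the leftmost and rightmost positions swapped. Your spectral bookkeeping is right — since $a<b$ forces the standardization value of the lone $a$ strictly below every $b$, the $\std$-values of the $k$ copies of $b$ always form the consecutive run $\{\beta-k+2,\dots,\beta+1\}$ regardless of where the $a$ sits inside the block, which justifies the claimed values $q^{b}t^{\beta-k+2}$ at position $m+k$ in $u'b^{k-2}ab^{2}u''$ and $q^{b}t^{\beta-k+1}$ at position $m+k+1$ in $u'b^{k-1}abu''$.
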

\begin{ex}
We have
\begin{equation}\Den{033320}=\frac{1-qt}{1-qt^2}\Den{033230}=\frac{1-qt}{1-qt^3}\Den{032330}=(1-qt)\Den{023330}.\end{equation}
\end{ex}
\section{The block-jump rule\label{sec-bloc}}
\subsection{Small jumps to big jumps}
It should be recalled that computing Macdonald polynomials along different paths on the Yang–Baxter graph yields the same result; the process is confluent. In particular, observe, in Figure \ref{f-022330to033220} what happens in the computation of $M_{033220}$ from $M_{022330}$. 
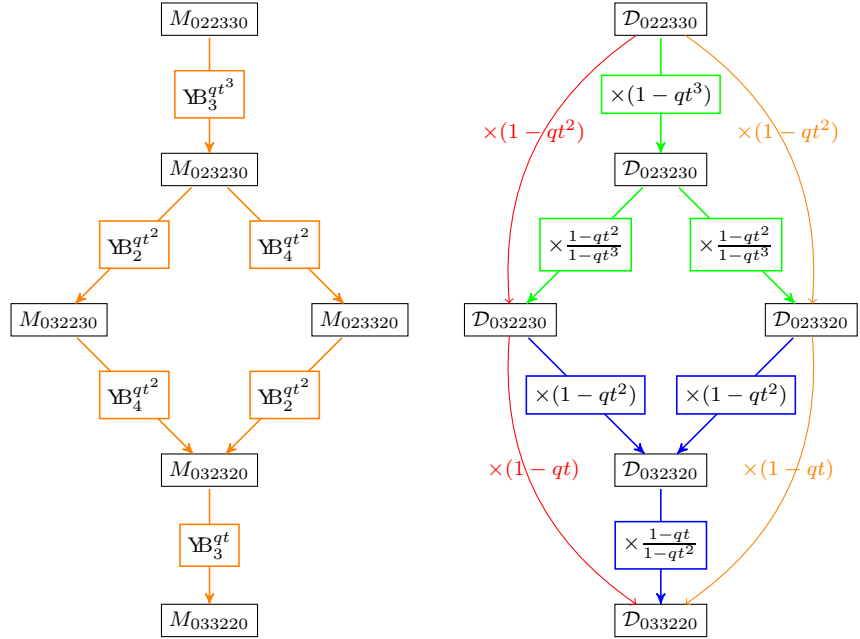
\begin{figure}[ht]
\begin{center}
\tiny
\begin{tikzpicture}
\GraphInit[vstyle=Shade]
    \tikzstyle{VertexStyle}=[shape = rectangle,
draw
]
\SetUpEdge[lw = 1.5pt,
color = orange,
 labelcolor = gray!30,
 style={post}
]
\tikzset{LabelStyle/.style = {draw,
                                     fill = white,
                                     text = black}}
\tikzset{EdgeStyle/.style={post}}
\Vertex[x=0, y=8,
 L={$M_{022330}$}]{x}
\Vertex[x=0, y=6,
 L={$M_{023230}$}]{y}
 \Vertex[x=-2, y=4,
 L={$M_{032230}$}]{z1}
 \Vertex[x=2, y=4,
 L={$M_{023320}$}]{z2}
 \Vertex[x=0, y=2,
 L={$M_{032320}$}]{t}
 \Vertex[x=0, y=0,
 L={$M_{033220}$}]{s}
\Edge[label={$\Yang{qt^3}3$}](x)(y)
\Edge[label={$\Yang{qt^2}2$}](y)(z1)
\Edge[label={$\Yang{qt^2}4$}](y)(z2)
\Edge[label={$\Yang{qt^2}4$}](z1)(t)
\Edge[label={$\Yang{qt^2}2$}](z2)(t)
\Edge[label={$\Yang{qt}3$}](t)(s)

\Vertex[x=6, y=8,
 L={$\Den{022330}$}]{fx}
\Vertex[x=6, y=6,
 L={$\Den{023230}$}]{fy}
 \Vertex[x=4, y=4,
 L={$\Den{032230}$}]{fz1}
 \Vertex[x=8, y=4,
 L={$\Den{023320}$}]{fz2}
 \Vertex[x=6, y=2,
 L={$\Den{032320}$}]{ft}
 \Vertex[x=6, y=0,
 L={$\Den{033220}$}]{fs}
\Edge[color={green},label={$\times (1-qt^3)$}](fx)(fy)
\Edge[color={green},label={$\times\frac{1-qt^2}{1-qt^3}$}](fy)(fz1)
\Edge[color = {green}, label={$\times\frac{1-qt^2}{1-qt^3}$}](fy)(fz2)
\Edge[color={blue},label={$\times (1-qt^2)$}](fz1)(ft)
\Edge[color = {blue}, label={$\times(1-qt^2)$}](fz2)(ft)
\Edge[color={blue},label={$\times\frac{1-qt}{1-qt^2}$}](ft)(fs)
\draw[->, bend right, red] (fx) to node[above] {$\times(1-qt^2)$} (fz1);
\draw[->, bend right, red] (fz1) to node[above] {$\times(1-qt)$} (fs);

\draw[->, bend left, orange] (fx) to node[above] {$\times(1-qt^2)$} (fz2);
\draw[->, bend left, orange] (fz2) to node[above] {$\times(1-qt)$} (fs);
\end{tikzpicture}
\end{center}
\caption{From $M_{022330}$ to $M_{33220}$. \label{f-022330to033220}}
\end{figure}
And, finally, $\Den{033220}=(1-qt)(1-qt^2)\Den{022330}$.
This can be obtained by applying Jump Lemma (Lemma \ref{l-jl}) two times, following the red path (see Figure \ref{f-022330to033220}).
This method can be easily generalized as follows.
\begin{prop}\label{p-block}
For any $b>a\geq 0$, $u'\in\mathbb N^m$, $u''\in \mathbb N^n$, and $k,\ell>0$ the polynomial $\num{\frac{\Den{u'b^\ell a^ku''}}{\Den{u'a^k b^\ell u''}}} $ divides
$\displaystyle\prod_{i=k-1}^{k+\ell-2}\big(1-q^{b-a}t^{\beta-\alpha-i}\big),$
where
$\spectre{u'a^kb^\ell u''}[m+k]=q^at^{\alpha}$ and
$\spectre{u'a^kb^\ell u''}[m+k+1]=q^bt^{\beta}$.
\end{prop}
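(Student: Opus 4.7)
The plan is to induct on $\ell$, the base case $\ell=1$ being precisely Lemma~\ref{l-jl}. For the inductive step, I would decompose the passage $u'a^kb^\ell u''\to u'b^\ell a^k u''$ in the Yang--Baxter graph into two stages: first move the leftmost $b$ of the block past $a^k$, reaching $u'ba^kb^{\ell-1}u''$; then move the remaining $b^{\ell-1}$ past $a^k$, reaching $u'b^\ell a^k u''$. Lemma~\ref{l-jl} applies to the first stage and the induction hypothesis applies to the second, so the Conjunction rule from Subsection~\ref{subsec-denom} yields a bound on $\num{\Den{u'b^\ell a^k u''}/\Den{u'a^k b^\ell u''}}$ as the product of the two individual bounds.

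The first stage contributes a factor dividing $1-q^{b-a}t^{\beta-\alpha-k+1}$ directly from Lemma~\ref{l-jl}. For the second stage, I rewrite $u'ba^kb^{\ell-1}u''$ as $\tilde u'\,a^k\,b^{\ell-1}\,\tilde u''$ with $\tilde u'=u'\cdot b$ of length $m+1$ and $\tilde u''=u''$, and define $\tilde\alpha,\tilde\beta$ by $\spectre{u'ba^kb^{\ell-1}u''}[m+k+1]=q^a t^{\tilde\alpha}$ and $\spectre{u'ba^kb^{\ell-1}u''}[m+k+2]=q^b t^{\tilde\beta}$. Using $\spectre{v}[i]=q^{v_i}t^{\std{v}_i-1}$ together with the combinatorial identity $\std{v}_i=1+\#\{j:v_j<v_i\}+\#\{j>i:v_j=v_i\}$, one then verifies the key spectral identities $\tilde\alpha=\alpha$ and $\tilde\beta=\beta-1$. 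The first holds because the multiset of values is unchanged by the reorganization and the last $a$ of the block has no $a$'s to its right in either configuration; the second because the new leftmost $b$ of the shortened block has exactly one fewer $b$ to its right than did the leftmost $b$ of the original block.

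Applying the induction hypothesis with parameters $(k,\ell-1,\tilde\alpha,\tilde\beta)$ bounds the denominator ratio of the second stage by $\prod_{i=k-1}^{k+\ell-3}(1-q^{b-a}t^{\tilde\beta-\tilde\alpha-i})=\prod_{j=k}^{k+\ell-2}(1-q^{b-a}t^{\beta-\alpha-j})$ after reindexing $j=i+1$. Multiplying this with the first-stage factor $1-q^{b-a}t^{\beta-\alpha-k+1}$ via the Conjunction rule produces exactly $\prod_{i=k-1}^{k+\ell-2}(1-q^{b-a}t^{\beta-\alpha-i})$, the announced bound. The only substantive obstacle is the spectral bookkeeping needed to establish $\tilde\alpha=\alpha$ and $\tilde\beta=\beta-1$; once these are confirmed from the $\std$ formula, the rest is a direct iteration of Lemma~\ref{l-jl} combined with the Conjunction rule.
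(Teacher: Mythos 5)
Your proposal is correct and follows essentially the same route as the paper: induction on $\ell$, peeling off the leftmost $b$ via Lemma~\ref{l-jl}, applying the inductive hypothesis to $u'b\cdot a^k b^{\ell-1}\cdot u''$ with the shifted spectral data $\tilde\alpha=\alpha$, $\tilde\beta=\beta-1$, and combining via the Conjunction rule. The only cosmetic difference is that you spell out the $\std$-based verification of $\tilde\alpha=\alpha$ and $\tilde\beta=\beta-1$, which the paper asserts without elaboration.
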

\begin{proof}
We proceed by induction on $\ell$. If $\ell = 1$ the result is given by Lemma \ref{l-jl}. If $\ell>1$, we first apply Lemma \ref{l-jl} and show that $\num{\frac{\Den{u'ba^{k}b^{\ell -1}u''}}{\Den{u'a^kb^\ell u''}}}$ divides $1-q^{b-a}t^{\beta-\alpha-k+1}$. We apply the induction hypothesis to $u'ba^kb^{\ell'}u''=va^kb^{\ell'}u''$ with $\spectre{va^kb^\ell u''}[m'+k] = q^at^{\alpha}$ and 
$\spectre{va^kb^\ell u''}[m'+k+1] = q^at^{\beta'}$ where $\ell'=\ell-1$, $v=u'b$, $m'=m+1$, and $\beta'=\beta-1$. We find that $\num{\frac{\Den{u'b^\ell a^ku''}}{\Den{u'ba^k b^{\ell-1}u''}}}$ divides
\begin{equation}\begin{array}{c}
(1-q^{b-a}t^{\beta'-\alpha-k+1})(1-q^{b-a}t^{\beta'-\alpha-k})\cdots(1-q^{b-a}t^{\beta'-\alpha-k-\ell'+2})=\\
(1-q^{b-a}t^{\beta-\alpha-k})(1-q^{b-a}t^{\beta-\alpha-k-1})\cdots(1-q^{b-a}t^{\beta-\alpha-k-\ell+2}).
\end{array}
\end{equation}
Since  \begin{equation}\frac{\Den{u'b^\ell a^{k}b^{\ell -1}u''}}{\Den{u'a^kb^\ell u''}}= \frac{\Den{u'b^\ell a^{k}u''}}{\Den{u'ba^kb^{\ell-1} u''}}\frac{\Den{u'ba^kb^{\ell-1} u''}}{\Den{u'a^kb^{\ell} u''}},\end{equation} the Conjonction rule implies that 
$\num{\frac{\Den{u'b^\ell a^{k}b^{\ell -1}u''}}{\Den{u'a^kb^\ell u''}}}$ divides 
\begin{equation}(1-q^{b-a}t^{\beta-\alpha-k+1})(1-q^{b-a}t^{\beta-\alpha-k})\cdots(1-q^{b-a}t^{\beta-\alpha-k-\ell+2}),\end{equation}
as expected.
\end{proof}
It should be noted that $M_{033220}$ may also be derived by following the orange arrows shown in Figure~\ref{f-022330to033220}, which corresponds to applying the dual jump lemma twice.\\
This suggests a dual version of Proposition~\ref{p-block} whose proof is very similar.
\begin{prop}\label{p-blockdual}
For any $b>a\geq 0$, $u'\in\mathbb N^m$, $u''\in \mathbb N^n$, and $k,\ell>0$ the polynomial $\num{\frac{\Den{u'b^\ell a^ku''}}{\Den{u'a^k b^\ell u''}}} $  divides
$\displaystyle\prod_{i=\ell-1}^{k+l-2}\big(1-q^{b-a}t^{\beta-\alpha-i}\big)$,
where
$\spectre{u'a^kb^\ell u''}[m+k]=q^at^{\alpha}$ and
$\spectre{u'a^kb^\ell u''}[m+k+1]=q^bt^{\beta}$.
\end{prop}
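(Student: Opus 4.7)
The plan is to mirror the inductive argument used for Proposition~\ref{p-block}, but now to induct on $k$ (rather than on $\ell$) and to invoke the Dual Jumping Lemma (Lemma~\ref{l-djl}) in place of Lemma~\ref{l-jl}. This is natural because Lemma~\ref{l-djl} lets us push a single $a$ to the right of a whole block of $b$'s in one step, which is exactly what one needs for an induction that decreases $k$ by one.

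The base case $k=1$ is immediate: the product on the right collapses to the single factor $1-q^{b-a}t^{\beta-\alpha-\ell+1}$, and this is precisely the content of Lemma~\ref{l-djl} applied with prefix $u'$ and a block of $b$'s of length $\ell$. For the inductive step $k>1$, I would decompose the target path as
\[
u'a^{k}b^{\ell}u''\;\xrightarrow[]{\text{path}_1}\;u'a^{k-1}b^{\ell}a\,u''\;\xrightarrow[]{\text{path}_2}\;u'b^{\ell}a^{k}u'',
\]
where path$_1$ swaps the rightmost $a$ of the block $a^{k}$ past the block $b^{\ell}$, and path$_2$ subsequently carries the remaining $a^{k-1}$ past $b^{\ell}$. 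Applying Lemma~\ref{l-djl} with prefix $u'a^{k-1}$ (of length $m+k-1$) shows that $\num{\frac{\Den{u'a^{k-1}b^{\ell}a\,u''}}{\Den{u'a^{k}b^{\ell}u''}}}$ divides $1-q^{b-a}t^{\beta-\alpha-\ell+1}$, since the hypothesis gives exactly the required spectral values $q^at^\alpha$ and $q^bt^\beta$ at positions $m+k$ and $m+k+1$ of $u'a^{k}b^{\ell}u''$.

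To apply the induction hypothesis to path$_2$, with parameters $k'=k-1$, $\ell'=\ell$, new prefix $u'$, and new suffix $au''$, one must identify the spectral values at positions $m+k-1$ and $m+k$ of $u'a^{k-1}b^{\ell}au''$. A direct count using~\refeq{eq-valhspec} shows that the standardization rank at position $m+k-1$ increases by $1$ compared to that at position $m+k$ in $u'a^{k}b^{\ell}u''$: the last surviving $a$ of the block $a^{k-1}$ now has one additional $a$ strictly to its right, namely the one just transported past $b^{\ell}$. The rank of the first $b$ is unchanged, because the multiset of values is preserved and the number of $b$'s strictly to the right of it is the same. Hence $\spectre{u'a^{k-1}b^{\ell}au''}[m+k-1]=q^at^{\alpha+1}$ and $\spectre{u'a^{k-1}b^{\ell}au''}[m+k]=q^bt^{\beta}$. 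The induction hypothesis therefore bounds $\num{\frac{\Den{u'b^{\ell}a^{k}u''}}{\Den{u'a^{k-1}b^{\ell}a\,u''}}}$ by $\prod_{i=\ell-1}^{k+\ell-3}(1-q^{b-a}t^{\beta-(\alpha+1)-i})$, which after reindexing $j=i+1$ becomes $\prod_{j=\ell}^{k+\ell-2}(1-q^{b-a}t^{\beta-\alpha-j})$.

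Combining the two bounds via the Conjunction rule yields that $\num{\frac{\Den{u'b^{\ell}a^{k}u''}}{\Den{u'a^{k}b^{\ell}u''}}}$ divides
\[
(1-q^{b-a}t^{\beta-\alpha-\ell+1})\prod_{j=\ell}^{k+\ell-2}(1-q^{b-a}t^{\beta-\alpha-j})=\prod_{i=\ell-1}^{k+\ell-2}(1-q^{b-a}t^{\beta-\alpha-i}),
\]
which is the claim. I expect the only real obstacle to be the spectral bookkeeping: namely, verifying the shift $(\alpha,\beta)\mapsto(\alpha+1,\beta)$ when the rightmost $a$ of the block is sent past $b^{\ell}$. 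Once this shift is justified, the telescoping of the exponents in the product is automatic, and the argument runs entirely parallel to the one given for Proposition~\ref{p-block}.
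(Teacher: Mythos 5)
Your proof is correct and is exactly the dual of the paper's proof of Proposition~\ref{p-block}, which is what the paper has in mind when it calls the argument for Proposition~\ref{p-blockdual} ``very similar'': you induct on $k$ instead of $\ell$, invoke Lemma~\ref{l-djl} in place of Lemma~\ref{l-jl}, peel off the rightmost $a$ rather than the leftmost $b$, and combine via the Conjunction rule. Your spectral bookkeeping confirming $(\alpha,\beta)\mapsto(\alpha+1,\beta)$ when passing from $u'a^kb^\ell u''$ to $u'a^{k-1}b^\ell au''$ is right (by \refeq{eq-valhspec}: the rank at the last surviving $a$ gains one from the transported $a$ now sitting to its right, while the rank at the leading $b$ is unchanged), and the reindexing telescopes as claimed.
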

The following Theorem summarizes  Propositions \ref{p-block} and \ref{p-blockdual}.
\begin{thm}[Block jump rule]\label{thm-block}
For any $b>a\geq 0$, $u'\in\mathbb N^m$, $u''\in \mathbb N^n$, and $k,\ell>0$, the polynomial $\num{\frac{\Den{u'b^\ell a^ku''}}{\Den{u'a^k b^\ell u''}}} $  divides
$\displaystyle\prod_{i=\max\{k,\ell\}-1}^{k+\ell-2}\big(1-q^{b-a}t^{\beta-\alpha-i}\big),$
where
$\spectre{u'a^kb^\ell u''}[m+k]=q^at^{\alpha}$ and
$\spectre{u'a^kb^\ell u''}[m+k+1]=q^bt^{\beta}$.
\end{thm}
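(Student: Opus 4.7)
The plan is to deduce Theorem~\ref{thm-block} by combining the two divisibility bounds given by Propositions~\ref{p-block} and~\ref{p-blockdual} through the Disjunction rule of Subsection~\ref{subsec-denom}. Both propositions constrain the \emph{same} polynomial $\num{\frac{\Den{u'b^\ell a^k u''}}{\Den{u'a^k b^\ell u''}}}$, so this polynomial must divide the greatest common divisor of the two given products; the only real task is then to compute that $\gcd$ and identify it with the product appearing in the statement.

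Concretely, the two propositions correspond to two distinct natural paths in the Yang--Baxter graph from $u'a^k b^\ell u''$ to $u'b^\ell a^k u''$. Proposition~\ref{p-block} is proved by iterating Lemma~\ref{l-jl}, moving the leftmost remaining $b$ through the $a$-block one step at a time, while Proposition~\ref{p-blockdual} is proved by iterating Lemma~\ref{l-djl}, sending the whole $b^\ell$-block past one $a$ at a time. The Disjunction rule applied to these two parallel paths therefore yields
\begin{equation*}
\num{\frac{\Den{u'b^\ell a^k u''}}{\Den{u'a^k b^\ell u''}}}\ \bigg|\ \gcd\!\left(\prod_{i=k-1}^{k+\ell-2}(1-q^{b-a}t^{\beta-\alpha-i}),\ \prod_{i=\ell-1}^{k+\ell-2}(1-q^{b-a}t^{\beta-\alpha-i})\right).
\end{equation*}

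All that remains is the $\gcd$ computation. Since both products are built from linear factors indexed by contiguous intervals sharing the right endpoint $k+\ell-2$, the $\gcd$ collapses to the product over the intersection $\{\max(k,\ell)-1,\ldots,k+\ell-2\}$ \emph{provided} that the factors $1-q^{b-a}t^{\beta-\alpha-i}$ are pairwise coprime. This coprimality is the only delicate point and I would handle it exactly as at the end of the proof of Lemma~\ref{l-jl}: for $i\neq j$, a suitable monomial combination of $1-q^{b-a}t^{\beta-\alpha-i}$ and $1-q^{b-a}t^{\beta-\alpha-j}$ reduces to $1-t^{|i-j|}$, and any common divisor with one of the original factors would force $q^{b-a}$ to equal a fixed root of unity, which is impossible since $q$ is a formal parameter and $b-a>0$. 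With coprimality established, the $\gcd$ simplifies to the claimed product and the theorem follows.
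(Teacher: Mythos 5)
Your proposal is correct and follows essentially the same route as the paper: apply Propositions~\ref{p-block} and~\ref{p-blockdual} and combine them with the Disjunction rule, leaving only the observation that the $\gcd$ of the two products over overlapping index intervals is the product over the intersection. You go slightly further than the paper by explicitly spelling out the pairwise coprimality of the factors $1-q^{b-a}t^{\beta-\alpha-i}$ (mirroring the argument at the end of Lemma~\ref{l-jl}), a point the paper's one-sentence proof leaves implicit, but this is a refinement of the same argument rather than a different one.
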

\begin{proof}
It suffices to apply the Disjonction rule and deduce that $\num{\frac{\Den{u'b^\ell a^ku''}}{\Den{u'a^k b^\ell u''}}}$ divides the greates common divisor of the polynomials
\begin{equation}(1-q^{b-a}t^{\beta-\alpha-k+1})(1-q^{b-a}t^{\beta-\alpha-k})\cdots(1-q^{b-a}t^{\beta-\alpha-k-\ell+2})\end{equation}
and
\begin{equation}(1-q^{b-a}t^{\beta-\alpha-\ell+1})(1-q^{b-a}t^{\beta-\alpha-\ell})\cdots(1-q^{b-a}t^{\beta-\alpha-k-\ell+2}).\end{equation}
\end{proof}

As an example, we obtain $M_{0333220}$ from $M_{0223330}$ by following two paths (see Figure \ref{f-0223330to0333220}) either we apply three jumps (red path)
\begin{equation}
{0223330}\mathop{\longrightarrow} {0322330}\longrightarrow {0332230}\longrightarrow M_{0333220},
\end{equation}
either we apply  two dual jumps (orange path)
\begin{equation}
{0223330}\mathop{\longrightarrow} {0233320}\longrightarrow  {0333220}.
\end{equation}
The red path gives that $\num{\frac{\Den{0333220}}{\Den{0223330}}}$ divides $(1-qt^3)(1-qt^2)(1-qt)$.\\
But the orange path gives that  $\num{\frac{\Den{0333220}}{\Den{0223330}}}$ divides $(1-qt^2)(1-qt)$. The Disjonction rule  implies that  $\num{\frac{\Den{0333220}}{\Den{0223330}}}$ divides \begin{equation}\gcd((1-qt^3)(1-qt^2)(1-qt),(1-qt^2)(1-qt))=(1-qt^2)(1-qt).\end{equation}
\begin{figure}[ht]
\begin{center}
\tiny
\begin{tikzpicture}
\GraphInit[vstyle=Shade]
    \tikzstyle{VertexStyle}=[shape = rectangle,
draw
]
\SetUpEdge[lw = 1.5pt,
color = orange,
 labelcolor = gray!30,
 style={post}
]
\tikzset{LabelStyle/.style = {draw,
                                     fill = white,
                                     text = black}}
\tikzset{EdgeStyle/.style={post}}
\Vertex[x=0, y=8,
 L={$M_{0223330}$}]{x}
\Vertex[x=-2, y=6,
 L={$M_{0322330}$}]{y}
 \Vertex[x=-2, y=4,
 L={$M_{0332230}$}]{z1}
 \Vertex[x=2, y=5,
 L={$M_{0233320}$}]{z2}
 \Vertex[x=0, y=2,
 L={$M_{0333220}$}]{t}
\Edge[color={red},label={$\Yang{qt^4}3\Yang{qt^3}2$}](x)(y)
\Edge[color={red},label={$\Yang{qt^3}4\Yang{qt^2}3$}](y)(z1)
\Edge[color = {red},label={$\Yang{qt^2}5\Yang{qt}4$}](z1)(t)
\Edge[color = {orange},label={$\Yang{qt^4}3\Yang{qt^3}4\Yang{qt^2}5$}](x)(z2)
\Edge[color = {orange},label={$\Yang{qt^3}2\Yang{qt^2}3\Yang{qt}4$}](z2)(t)

\Vertex[x=6, y=8,
 L={$\Den{0223330}$}]{fx}
\Vertex[x=4, y=6,
 L={$\Den{0322330}$}]{fy}
 \Vertex[x=4, y=4,
 L={$\Den{0332230}$}]{fz1}
 \Vertex[x=8, y=5,
 L={$\Den{0233320}$}]{fz2}
 \Vertex[x=6, y=2,
 L={$\Den{0333220}$}]{t}
\draw[->, bend right, red] (fx) to node[above] {$\times(1-qt^3)$} (fy);
\draw[->,  red] (fy) to node[above] {$\times(1-qt^2)$} (fz1);
\draw[->, bend right, red] (fz1) to node[above] {$\times(1-qt)$} (t);

\draw[->, bend left, orange] (fx) to node[above] {$\times(1-qt^2)$} (fz2);
\draw[->, bend left, orange] (fz2) to node[above] {$\times(1-qt)$} (ft);
\end{tikzpicture}
\end{center}
\caption{From $M_{022330}$ to $M_{33220}$. \label{f-0223330to0333220}}
\end{figure}
We called $\jump$ the algorithm such that,
for every path $u'a^kb^\ell u''\xrightarrow[]{\mathfrak p}u'b^\ell a^ku''$,
\begin{equation}\jump(\mathfrak p)=(1-q^{b-a}t^{\beta-\min\{k,\ell\}-\ell+1})(1-q^{b-a}t^{\beta-\alpha-\ell})\cdots(1-q^{b-a}t^{\beta-\alpha-k-\ell+2}),\end{equation}  where $u'$ is a vector of length $m$,
$\spectre{u'a^kb^\ell u''}[m+k]=q^at^{\alpha}$ and
$\spectre{u'a^kb^\ell u''}[m+k+1]=q^bt^{\beta}$, and $\jump(\mathfrak p)=\triv(\mathfrak p)$ elsewhere.
We will consider also the special paths $u'a^kb^\ell u''\xrightarrow[(m+1;k,\ell)]{\jumppath}u'b^\ell a^ku''$, which means that acting by 
\begin{equation}\big(\Yang{q^{b-a}t^{\beta-\alpha}}{m+k}\cdots \Yang{q^{b-a}t^{\beta-\alpha-k+1}}{m+1}\big)\cdots
\big(\Yang{q^{b-a}t^{\beta-\alpha-\ell+1}}{m+k+\ell-1}\cdots \Yang{q^{b-a}t^{\beta-\alpha-k-\ell+2}}{m+\ell}\big)
\end{equation}
on $M_{u'a^kb^\ell u''}$ produces $M_{u'b^\ell a^k u''}$
and
$u'a^kb^\ell u''\xrightarrow[(m+1,k,\ell)]{\djumppath}u'b^\ell a^ku''$, which means that acting  by 
\begin{equation}\big(\Yang{q^{b-a}t^{\beta-\alpha}}{m+k}\cdots \Yang{q^{b-a}t^{\beta-\alpha-\ell+1}}{m+k+\ell-1}\big)\cdots
\big(\Yang{q^{b-a}t^{\beta-\alpha-k+1}}{m+1}\cdots \Yang{q^{b-a}t^{\beta-\alpha-k-\ell+2}}{m+\ell}\big)
\end{equation}
on $M_{u'a^kb^\ell u''}$ produces $M_{u'b^\ell a^ku''}$.\\
Note that, for the sake of readability, we only display certain parameters:  
the position of the first element of the first block ($m+1$),
the size of the first block ($k$),  
and the size of the second block ($\ell$). The remaining parameters $a$, $b$, $\alpha$, and $\beta$ can be recovered from the examination of the vector $u' a^k b^\ell u''$.
\\
For instance, in Figure \ref{f-0223330to0333220}
the path ${0223330}{\color{red}\xrightarrow[(2,2,3)]{\jumppath}}{0333220}$ is drawn in red correspond to the operator
\begin{equation}
\big(\Yang{qt^{4}}3\Yang{qt^3}2\big)\big(\Yang{qt^3}4\Yang{qt^2}3\big)
\big(\Yang{qt^2}5\Yang{qt}4\big),
\end{equation}
because $\spectre{0223330}=[t,t^3q^2,t^2q^2,t^6q^3,t^5q^3,t^4q^3,1].$\\
The path
${0223330}{\color{orange}\xrightarrow[(2,2,3)]{\djumppath}}{0333220}$ corresponding to the operator
\begin{equation}
\big(\Yang{qt^4}3\Yang{qt^3}4\Yang{qt^2}5\big)\big(\Yang{qt^3}2\Yang{qt^2}3\Yang{qt}4\big)
\end{equation} is drawn in orange.\\
The paths $\jumppath(i;k,\ell)$ and $\djumppath(i;k,\ell)$ are distinct, but thanks to the confluence of the Yang–Baxter graph, they represent the same operator $\jumpop(i;k,\ell)$ acting on Macdonald polynomials with two consecutive blocks, the first starting at position $i$ with size $k$, and the second of size $\ell$.
The definitions of $\jumppath(i;k,\ell)$ and $\djumppath(i;k,\ell)$ thus provide two different expressions for the same operator $\jumpop(i;k,\ell)$.\\
More precisely, the path $\jumppath(i;k,\ell)$ (resp. $\djumppath(i;k,\ell)$) is defined as the concatenation of $\ell$ (resp. $k$) elementary jumps, namely 
\begin{equation}
\jumppath(i;k,1)\cdots \jumppath(i+\ell-1;k,1);
\end{equation}
respectively
\begin{equation}
\djumppath(i+k-1;1,\ell)\cdots \djumppath(i;1,\ell).
\end{equation}
Hence, we have
\begin{equation}
\begin{array}{rcl}
\jumpop(i;k,\ell)
&=&
\jumpop(i;k,1)\cdots \jumpop(i+\ell-1;k,1)
\\[4pt]
&=&
\jumpop(i+k-1;1,\ell)\cdots \jumpop(i;1,\ell).
\end{array}
\end{equation}
To avoid any confusion, we emphasize that the notion of a jump in the Yang–Baxter graph gives rise to four distinct but related objects : 
\begin{itemize}
    \item two paths, $\jumppath$ and 
    $\djumppath$;
    \item a partially defined operator $\jumpop$
    on Macdonald polynomials, corresponding to the operation performed when following these paths;
    \item and an algorithm $\jump$ that computes a denominator using the properties of the paths 
    $\jumppath$ and $\djumppath$.
\end{itemize}
\subsection{Explicit expressions for jumps}
The next proposition gives a concise expression for the elementary jumps.
\begin{prop}\label{prop-elemjumps}
Let $b>a$ and $u'$ be a vector of size $m$.
\begin{enumerate}
    \item 
Let $v=u'a^kbu''$. Suppose that the spectral vector satisfies $\frac{\spectre{v}[m+k+1]}{\spectre{v}[m+k]}=t^\alpha q^{b-a}$ then $\jumpop(m+1;k,1)$ acts on $M_v$ as the operator
\begin{equation}\label{eq-jumopformula}
J^{q^{b-a}t^{\alpha-k+1}}_{m+1,k}=T_{m+k}\cdots T_{m+1}+\frac{1-t}{1-q^{b-a}t^{\alpha-k+1}}\left(1+\sum_{i=2}^{k}T_{m+k}\cdots T_{m+i}\right).
\end{equation}
\item Let $w=u'ab^\ell u''$.
Suppose that the spectral vector satisfies $\frac{\spectre{w}[m+1]}{\spectre{w}[m+2]}=t^\alpha q^{b-a}$ then $\jumpop(m+1;1,\ell)$ acts on $M_w$ as
\begin{equation}\label{eq-djumopformula}
{J^\dag}^{q^{b-a}t^{\alpha-\ell+1}}_{m+1,\ell}=T_{m+1}\cdots T_{m+\ell}+\frac{1-t}{1-q^{b-a}t^{\alpha-\ell+1}}\left(1+\sum_{i=1}^{\ell-1}T_{m+1}\cdots T_{m+i}\right).
\end{equation}
\end{enumerate}
\end{prop}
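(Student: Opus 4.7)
The plan is to expand $\jumpop(m+1;k,1)$ as a product of elementary Yang--Baxter operators along the path $\jumppath(m+1;k,1)$, and then verify the claimed closed form by induction on $k$, using the symmetry of $M_v$ established in Proposition~\ref{p-symM}. Part~(2) will be handled by an entirely dual argument.

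For case~(1), I would first observe that since $\ell=1$, the path $\jumppath(m+1;k,1)$ consists of a single column of Yang operators, namely $\Yang{q^{b-a}t^{\alpha}}{m+k}\,\Yang{q^{b-a}t^{\alpha-1}}{m+k-1}\cdots\Yang{q^{b-a}t^{\alpha-k+1}}{m+1}$; the $t$-exponents decrease by $1$ along the way because the $a$'s inside the block are consecutively standardized, by the explicit formula~\refeq{eq-valhspec}. The base case $k=1$ is then tautological: the product is the single Yang operator $\Yang{q^{b-a}t^\alpha}{m+1}$, which matches the empty-sum version of~\refeq{eq-jumopformula}.

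For the inductive step, I would factor the product as $\Yang{q^{b-a}t^{\alpha}}{m+k}$ followed by a product of $k-1$ Yang operators. Applying the induction hypothesis to $M_{vs_{m+k}}=M_{u'a^{k-1}bau''}$ (whose spectral ratio at positions $m+k-1,m+k$ is $q^{b-a}t^{\alpha-1}$) identifies the tail with the operator $J^{q^{b-a}t^{\alpha-k+1}}_{m+1,k-1}$. Expanding the composition $\bigl(T_{m+k}+\frac{1-t}{1-q^{b-a}t^{\alpha}}\bigr)\cdot J^{q^{b-a}t^{\alpha-k+1}}_{m+1,k-1}$ and matching it against~\refeq{eq-jumopformula}, all summands involving products $T_{m+k}T_{m+k-1}\cdots T_{m+i}$ (for $i=1,\ldots,k$) appear on both sides; the discrepancy reduces to showing that $\frac{1-t}{1-q^{b-a}t^{\alpha}}\,J^{q^{b-a}t^{\alpha-k+1}}_{m+1,k-1}$, when applied to $M_v$, produces $\frac{1-t}{1-q^{b-a}t^{\alpha-k+1}}M_v$.

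This final reduction is the heart of the argument and is where the symmetry of $M_v$ in the variables $x_{m+1},\ldots,x_{m+k}$ (Proposition~\ref{p-symM}) enters decisively: Claim~\ref{claim-kerT_i-t} yields $M_v\,T_{m+i}=tM_v$ for $i=1,\ldots,k-1$, hence $M_v\,T_{m+k-1}\cdots T_{m+i}=t^{k-i}M_v$. After substitution and use of the geometric sum $1+t+\cdots+t^{k-2}=\frac{1-t^{k-1}}{1-t}$, the reduction boils down to the scalar identity
\[
t^{k-1}+\frac{1-t^{k-1}}{1-q^{b-a}t^{\alpha-k+1}}=\frac{1-q^{b-a}t^{\alpha}}{1-q^{b-a}t^{\alpha-k+1}},
\]
which is immediate by cross-multiplication. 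Part~(2) is then handled by a dual induction on $\ell$: the path $\djumppath(m+1;1,\ell)$ reduces to $\Yang{q^{b-a}t^\alpha}{m+1}\cdots\Yang{q^{b-a}t^{\alpha-\ell+1}}{m+\ell}$, and the symmetry of $M_w$ in $x_{m+2},\ldots,x_{m+\ell+1}$ plays the role that the symmetry of $M_v$ played above, producing the analogous scalar cancellation. The main obstacle is really just this final step: without exploiting the symmetry of $M_v$ (resp.\ $M_w$) one cannot collapse the multi-variable operator identity into a one-variable scalar computation, and the book-keeping of the $t$-exponents along the Yang--Baxter path demands care but is otherwise routine.
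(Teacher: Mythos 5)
Your proof is correct, but it peels off the opposite end of the Yang--Baxter product than the paper does, and the symmetry of $M_v$ is exploited in a different (somewhat heavier) way.

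The paper's induction writes $\jumppath(m+1;k,1)$ as $\jumppath(m+2;k-1,1)$ followed by the final factor $\Yang{q^{b-a}t^{\alpha-k+1}}{m+1}$; the inductive hypothesis identifies $M_v\cdot J^{q^{b-a}t^{\alpha-k+2}}_{m+2,k-1}$ with $M_{u'aba^{k-1}u''}$. The point is then that the ``extra'' summand $\frac{1-t}{1-q^{b-a}t^{\alpha-k+2}}\bigl(1+\sum_{i=2}^{k-1}T_{m+k}\cdots T_{m+i+1}\bigr)$ only involves generators $T_{m+3},\dots,T_{m+k}$, so it preserves the symmetry of $M_v$ in $x_{m+1},x_{m+2}$. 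A single application of \refeq{eq-YangSym} then makes $\Yang{q^{b-a}t^{\alpha-k+1}}{m+1}$ act on that piece as the scalar $\frac{1-q^{b-a}t^{\alpha-k+2}}{1-q^{b-a}t^{\alpha-k+1}}$, and the telescoping of the two denominators finishes the step. You instead strip off the \emph{first} factor $\Yang{q^{b-a}t^{\alpha}}{m+k}$, apply the inductive hypothesis on $M_{vs_{m+k}}$ to identify the tail with $J^{q^{b-a}t^{\alpha-k+1}}_{m+1,k-1}$, and then need to evaluate $M_v\cdot J^{q^{b-a}t^{\alpha-k+1}}_{m+1,k-1}$. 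This costs you the full symmetry of $M_v$ in all $k$ block variables rather than just the pair $x_{m+1},x_{m+2}$, which you then collapse to a scalar via $M_vT_{m+i}=tM_v$ and a geometric sum, ending in the displayed two-term identity. Both routes are sound; yours is more computational (geometric series plus cross-multiplication), while the paper's is a little slicker in that it invokes symmetry only once through \refeq{eq-YangSym} and lets the denominator cancellation happen automatically. Your bookkeeping of the $t$-exponents (spectral ratio dropping from $t^{\alpha}q^{b-a}$ for $v$ to $t^{\alpha-1}q^{b-a}$ for $vs_{m+k}$, yielding the same parameter $q^{b-a}t^{\alpha-k+1}$ in the induction hypothesis) is correct and is the essential consistency check for this variant.
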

\begin{proof}
The two assertions can be proved in a similar manner; we  only present the proof of the first one. We proceed by induction on $k$. For $k = 1$, the result follows directly from the definition of $\Yang{q^{b-a}t^\alpha}{m+1}$. Assume that the statement holds for $k-1$.  
The polynomial $M_{u'aba^{k-1}u''}$ is obtained by applying the operator
\begin{equation}
T_{m+k}\cdots T_{m+2}
+\frac{1-t}{1-q^{b-a}t^{\alpha-k+2}}
\left(1+\sum_{i=2}^{k-1}T_{m+k}\cdots T_{m+i}\right)
\end{equation}
to the polynomial $M_v$.
Since $M_{u'ba^ku''}=M_{u'aba^{k-1}u''}\cdot\Yang{q^{b-a}t^{\alpha-k+1}}{m+1}$, we have to understand the action of the operator
\begin{equation}\left(
T_{m+k}\cdots T_{m+2}
+\frac{1-t}{1-q^{b-a}t^{\alpha-k+2}}
\left(1+\sum_{i=2}^{k-1}T_{m+k}\cdots T_{m+i+1}\right)\right)\cdot\Yang{q^{b-a}t^{\alpha-k+1}}{m+1}
\end{equation}
on $v$. The polynomial $M_v$ is symmetric in the variables $x_{m+1}$ and $x_{m+2}$.  

Since the operator 
$\displaystyle
\left(1+\sum_{i=2}^{k-1}T_{m+k}\cdots T_{m+i+1}\right)
$
does not act on these variables, the polynomial 
$\displaystyle
M_v\cdot\left(1+\sum_{i=2}^{k-1}T_{m+k}\cdots T_{m+i+1}\right)
$
is also symmetric in the variables $x_{m+1}$ and $x_{m+2}$. Hence by \refeq{eq-YangSym}, we have
\begin{equation}\begin{array}{l}
M_v\cdot\left(\displaystyle1+\sum_{i=2}^{k-1}T_{m+k}\cdots T_{m+i+1}\right)\Yang{q^{b-a}t^{\alpha-k+1}}{m+1}=\\\quad\quad\quad\quad\quad\quad\quad\quad\quad\displaystyle
\frac{1-q^{b-a}t^{\alpha-k+2}}{1-q^{b-a}t^{\alpha-k+1}}
 M_v\cdot\left(1+\sum_{i=2}^{k-1}T_{m+k}\cdots T_{m+i+1}\right).\end{array}
\end{equation}
Hence,
\begin{equation}\begin{array}{rcl}
M_v\cdot\jump(m+1;k,1)&=&\displaystyle M_v\cdot T_{m+k}\cdot T_{m+2}\left(T_{m+1}+\frac{1-t}{1-q^{b-a}t^{\alpha-k+1}}\right)+
\\&&\displaystyle\frac{1-t}{1-q^{b-a}t^{\alpha-k+1}}M_v\cdot\left(1+\sum_{i=2}^{k-1}T_{m+k}\cdots T_{m+i+1}\right)\\
&=&M_v\cdot T_{m+k}\cdots T_{m+1}+\\&&\displaystyle\frac{1-t}{1-q^{b-a}t^{\alpha-k+1}}M_v\cdot\left(1+\sum_{i=2}^{k}T_{m+k}\cdots T_{m+i}\right).
\end{array}
\end{equation}
\end{proof}

\begin{rem}\rm
Assume that $a$, $b$, and $v$ satisfy the conditions of Proposition~\ref{prop-elemjumps}.1.
The reader should be careful that Proposition~\ref{prop-elemjumps} does not imply that 
$
\frac{\Den{u'ba^ku''}}{\Den{v}} = 1 - q^{b-a}t^{\alpha-k+1},
$
but only that 
$
\num{\frac{\Den{u'ba^ku''}}{\Den{v}}} = 1 - q^{b-a}t^{\alpha-k+1}$.
Indeed, some poles may cancel between the two vectors.\\
For instance, we have $\Den{0022}=q^2(1-qt)(1-qt^2)$ while $\Den{2002}=q^2(1-qt)(1-q^2t^2)$. The factor $1 - qt^2$ vanishes during the jump.
\end{rem}
\begin{cor}
Let $w=u'a^kb^\ell u''$ with $u'\in\mathbb N^m$,  $b>a$, and $\frac{\spectre w[m+k+1]}{\spectre w[m+k]}=t^\alpha q^{b-1}$. The following operator identities are satisfied when acting on $M_w$ :
\begin{equation}
\begin{array}{rcl}
\jumpop(m+1,k,\ell)&=&J_{m+1,k}^{q^{b-a}t^{\alpha-k+1}}
J_{m+2,k}^{q^{b-a}t^{\alpha-k}}\cdots J_{m+\ell,k}^{q^{b-a}t^{\alpha-k-\ell+2}}\\
&=&
{J^\dag}_{m+k,\ell}^{q^{b-a}t^{\alpha-\ell+1}}
{J^\dag}_{m+k-1,\ell}^{q^{b-a}t^{\alpha-\ell}}\cdots {J^\dag}_{m+1,\ell}^{q^{b-a}t^{\alpha-k-\ell+2}}.
\end{array}
\end{equation}
\end{cor}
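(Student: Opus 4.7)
The strategy is to expand each side using the factorizations of $\jumppath$ and $\djumppath$ into elementary jumps recalled just before Proposition~\ref{prop-elemjumps}, namely
\[
\jumppath(m+1;k,\ell)=\jumppath(m+1;k,1)\cdots\jumppath(m+\ell;k,1),
\]
\[
\djumppath(m+1;k,\ell)=\djumppath(m+k;1,\ell)\cdots\djumppath(m+1;1,\ell),
\]
and then to apply Proposition~\ref{prop-elemjumps}.1 (resp.\ Proposition~\ref{prop-elemjumps}.2) to each elementary factor. By the confluence of the Yang--Baxter graph, both compositions realize the same operator $\jumpop(m+1,k,\ell)$, so that the two displayed product formulas will automatically coincide once each has been established separately.

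For the first identity, I would argue by induction on $j\in\{0,\dots,\ell-1\}$ that after $j$ elementary jumps the current vector is $u'b^ja^kb^{\ell-j}u''$ and that the spectral ratio at the next $(a,b)$ boundary satisfies
\[
\frac{\spectre{u'b^ja^kb^{\ell-j}u''}[m+j+k+1]}{\spectre{u'b^ja^kb^{\ell-j}u''}[m+j+k]}=q^{b-a}t^{\alpha-j}.
\]
The base case $j=0$ is the hypothesis of the corollary. For the inductive step, I invoke Proposition~\ref{prop-elemjumps}.1 with parameter $\alpha-j$; this produces the factor $J^{q^{b-a}t^{\alpha-j-k+1}}_{m+j+1,k}$, which is exactly the $(j+1)$-th term of the stated product. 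Since the elementary jump acts on the spectral side of the Yang--Baxter graph by the corresponding cyclic permutation of coordinates (the $\affineopvec$/$s_i$ rules), formula \refeq{eq-valhspec} yields directly that the new ratio at positions $m+j+k+1,m+j+k+2$ equals $q^{b-a}t^{\alpha-j-1}$, closing the induction.

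The second identity is proved in exactly the same manner, by iteratively applying Proposition~\ref{prop-elemjumps}.2 to the elementary dual jumps: the $j$-th dual jump peels off the rightmost remaining $a$ of the shrinking $a^k$ block and moves it across $b^\ell$, the tracked spectral ratio now sitting at the leftmost surviving $(a,b)$ boundary and losing one unit of $t$-exponent per step. This produces the factor ${J^\dag}^{q^{b-a}t^{\alpha-\ell-j+1}}_{m+k-j,\ell}$ at the $(j+1)$-th step, matching the second product verbatim.

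The main obstacle is the spectral bookkeeping inside each inductive step, i.e.\ verifying that a single elementary jump shifts the critical $t$-exponent by exactly $-1$. This reduces to a direct but careful manipulation of $\spectre{v}[i]=q^{v_i}t^{\std{v}_i-1}$ together with the change of $\std v$ when a single value is moved past a block of equal values; equivalently, the shift can be read off by transporting the rules $v\xrightarrow{s_i}vs_i$ and $v\xrightarrow{\affineop}v\Phi$ to the spectral vector via $\affineopvec$ and $s_i$. Once this local shift lemma is in hand, the rest of the argument is pure bookkeeping.
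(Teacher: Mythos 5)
Your proposal matches the paper's proof, which is precisely "the result follows from a simple induction and Proposition~\ref{prop-elemjumps} together with the confluence of the Yang--Baxter graph": you factor $\jumppath$ and $\djumppath$ into elementary jumps, apply Proposition~\ref{prop-elemjumps} to each factor, and invoke confluence to identify the two resulting products. The spectral bookkeeping you flag (the $t$-exponent dropping by one at each elementary jump) is exactly the content of the "simple induction" the paper leaves to the reader, and your inductive formulation of it is correct.
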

\begin{proof}
Using the confluence property of the Yang--Baxter graph, the result follows from a simple induction and Proposition \ref{prop-elemjumps}.

\end{proof}
\section{Jumping in the staircases\label{sec-staircase}}
A staircase Macdonald polynomial is a Macdonald polynomial indexed by
\begin{equation}
\staircase(k, a, n) = ((n-1)a)^k\cdot((n-2)a)^k\cdots a^k\cdot 0^k,
\end{equation}
with the notation $a^k = \underbrace{a \cdots a}_{k\times}$.

\subsection{Staircase Macdonald polynomials from scratch}
In order to compute the staircase Macdonald polynomials, we shall consider Macdonald polynomials indexed by intermediate shapes defined by
\begin{equation}
\Qsc(k,a,n;m,b)
 = ((m-1)a+b)^k\cdot((m-2)a+b)^k\cdots(a+b)^k\cdot b^k\cdot 0^{k(n-m)},
\end{equation}
for all $a, k, n > 0$, $1 \leq m \leq n$, and $1 \leq b \leq a$.\\
Their definition directly yields an inductive computation procedure. Indeed,
If $1 \leq m \leq n$ and $1 \leq b < a$, we have
\begin{equation}\label{eq-Vi}\begin{array}{ll}
\Qsc(k,a;m,b)\xrightarrow[]{\affinepath^{mk}} V_0& \xrightarrow[(1;(n-m)k,k)]{\jumppath} V_1
\xrightarrow[(k+1;(n-m)k,k)]{\jumppath} V_2\cdots\\&\quad\quad\quad\cdots
\xrightarrow[((m-1)k+1;(n-m)k,k)]{\jumppath}{\Qsc(k,a;m,b+1)}\end{array}
\end{equation}
with \begin{equation}V_0=0^{k(n-m)}\cdot((m-1)a+b+1)^k\cdot ((m-2)a+b+1)^k\cdots(a+b+1)^k\cdot (b+1)^k,\end{equation}
and for any $0<i\leq m$,
\begin{equation}
V_i=((m-1)a+b+1)^k\cdots((m-i)a+b+1)^k\cdot0^{k(n-m)}\cdot((m-i-1)a+b+1)^k\cdots (b+1)^k.
\end{equation}
This path will be denoted by
\begin{equation}
\Qsc(k,a;m,b)\xrightarrow[(k,a;m,b)]{\rai}\Qsc(k,a;m,b+1)
\end{equation}
If $1 \leq m < n$, we have
\begin{equation}\label{eq-wi}\begin{array}{ll}
{\Qsc(k,a;m,a)}\xrightarrow[]{\affinepath^{(m+1)k}} W_0& \xrightarrow[(1;(n-m+1)k,k)]{\jumppath} W_1
\xrightarrow[(k+1;(n-m+1)k,k)]{\jumppath} W_2\cdots\\&\quad\quad\quad\cdots
\xrightarrow[(mk+1;(n-m+1)k,k)]{\jumppath}{\Qsc(k,a;m+1,1)}\end{array}
\end{equation}
with
\begin{equation}
W_0 = 0^{k(n-m-1)}\cdot(ma+1)^k\cdot((m-1)a+1)^k\dots1^k,
\end{equation}
and for any $0<i<m$,
\begin{equation} W_i = (ma+1)^k\dots((m-i+1)a+1)^k\cdot0^{k(n-m-1)}\cdot
((m-i)a+1)^k\cdots1^k. \end{equation}
This path will be denoted by
\begin{equation}
\Qsc(k,a;m,a)\xrightarrow[(k,a;m)]{\addstep}
\Qsc(k,a;m+1,1).
\end{equation}
For instance, we obtain ${\Qsc(2,2,3;2,2)}=44220000$ from ${\Qsc(2,2,3;2,1)}=33110000$ by
\begin{equation}
{33110000}\xrightarrow[]{\affinepath^4}{00004422}\xrightarrow[(1;4,2)]{\jumppath}{44000022}
\xrightarrow[(3;4,2)]{\jumppath}{44220000}
\end{equation}
and ${\Qsc(2,2,3;3,1)}={[55331100]}$ from 
${\Qsc(2,2,3;2,2)}={[44220000]}$ by
\begin{equation}
{44220000}\xrightarrow[]{\affinepath^6}{00553311}\xrightarrow[(1;2,2)]{\jumppath}{55003311}
\xrightarrow[(3;2,2)]{\jumppath}{55330011}
\xrightarrow[(5;2,2)]{\jumppath}{55331100}.
\end{equation}
We also consider the path
\begin{equation}\begin{array}{l}
\Qsc(k,a;m,a)\xrightarrow[(k,a;m)]{\addstep}
\Qsc(k,a;m+1,0)\xrightarrow[(k,a;m+1,0)]{\rai}
\cdots\\
\quad\quad\quad\quad\quad\quad\quad\quad\quad\quad\quad\quad\cdots\xrightarrow[(k,a;m+1,a-1)]{\rai} \Qsc(k,a;m+1,a).
\end{array}
\end{equation}
This path will be denoted by
\begin{equation}
\Qsc(k,a;m,a)\xrightarrow[(k,a;m)]{\pathup}\Qsc(k,a;m+1,a).
\end{equation}
The staircase vector 
$\staircase(k;a,n)$ is obtained by successively following paths of the form $\xrightarrow[(k,a;m)]{\pathup}$:
\begin{equation}\begin{array}{rcl}
\Qsc(k,a,n;0,a)=0^{nk}&\xrightarrow[(k,a;0)]{\pathup}&\Qsc(k,a,n;1,a)=a^k0^{(n-1)k}\\
\Qsc(k,a,n;1,a)&\xrightarrow[(k,a;1)]{\pathup}&\Qsc(k,a,n;2,a)=(2a)^k\cdot a^k0^{(n-2)k}\\
&\vdots&\\
\Qsc(k,a,n;n-2,a)&\xrightarrow[(k,a;n-2)]{\pathup}&\Qsc(k,a,n;n-1,a)\\&&=\staircase(k,a,n).
\end{array}\end{equation}
For instance, we have
\begin{equation}
\begin{array}{rclcl}
000000000000&\xrightarrow[(3,2;0)]{\pathup}&222000000000&=&\Qsc(3,2,4;1,2)\\
222000000000&\xrightarrow[(3,2;1)]{\pathup}&444222000000&=&\Qsc(3,2,4;2,2)\\
444222000000&\xrightarrow[(3,2;2)]{\pathup}&666444222000
&=&\Qsc(3,2,4;3,2)\\
&&&=&\staircase(3,2,3)\end{array}
\end{equation}
In terms of operators, we define
\begin{equation}
\raiseop(k,a;m,b)=\affineop^{mk}\jumpop(1;(n-m)k,k)\cdots\jumpop((m-1)k+1;(n-m),k),
\end{equation}
\begin{equation}
\addop(k,a;m)=\affineop^{(m+1)k}\jumpop(1;(n-m+1)k,k)\cdots
\jumpop(mk+1;(n-m+1)k,k),
\end{equation}
and
\begin{equation}
\upop(k,a;m)=\addop(k,a;m)\raiseop(k,a;m+1,0)\cdots\raiseop(k,a;m+1,a-1).
\end{equation}
Hence we have
\begin{equation}\label{eq-genQsc1}
M_{\Qsc(k,a;m,b+1)}=M_{\Qsc(k,a;m,b)}\raiseop(k,a;m,b),
\end{equation}
\begin{equation}\label{eq-genQsc2}
M_{\Qsc(k,a;m+1,1)}=M_{\Qsc(k,a;m,a)}\addop(k,a;m),
\end{equation}
\begin{equation}\label{eq-genQcs3}
M_{\Qsc(k,a;m,a+1)}=M_{\Qsc(k,a;m,a)}\upop(k,a;m,a),
\end{equation}
and, finally,
\begin{equation}\label{eq-genstaircase}
M_{\staircase(k;a,n)}=1\cdot \upop(k,a;0)\upop(k,a;1)\cdots \upop(k,a;n-2).
\end{equation}

\subsection{The unreachable pole}
Let $1\leq j\leq m$, using the notation of \refeq{eq-Vi}, we have
\begin{equation}
\spectre{V_{j-1}}[(j-1)k+1]=t^{(n-m)k-1}\end{equation} and \begin{equation}\spectre{V_{j-1}}[(n-m+j)k+1]=q^{(m-j)a+b+1}t^{(n-j)k-1}
\end{equation}
Hence applying Theorem \ref{thm-block}, we obtain that $\num{\frac{D_{V_j}}{D_{V_{j-1}}}}$ divides \begin{equation}
D_j=\prod_{i=1}^k\big(1-q^{(m-j)a+b+1}t^{(m-j)k+i}\big).
\end{equation}
Following Proposition \ref{prop-specomega}, we have to prove that for any $1\leq i\leq k$, 
the polynomial $1-q^{(m-j)a+b+1}t^{(m-j)k+i}$ does not vanish for $(q,t)=(\omega u^{-\frac{1+k} d},u^{\frac ad})$, where $d=\gcd(a,1+k)$ and $\omega$ is a primitive $a$th-root of the unity. We have
\begin{equation}
1-\big({\omega}u^{-\frac{1+k}d}\big)^{(m-j-1)a+b+1}(u^{\frac ad})^{(m-j)k+i}=1-\omega^{b+1}u^{-\frac ad(m-j)-\frac{(b+1)(k+1)}d+\frac{ai}d} = 0,
\end{equation}
if and only if $a$ divides $b+1$ (this implies $a=b+1$ because $b<a$) and $-a(m-j)-(b+1)(k+1)+ai=0$ so  $i=m-j+(k+1)>k$.
Hence,  $1-q^at^{k+1}$ does not divides $D_j$ and, therefore, it does not divides  $\num{\frac{D_{V_j}}{D_{V_{j-1}}}}$.
From \refeq{eq-genQsc1}, and using successively many times the Conjonction rule (see Section \ref{subsec-denom}), we obtain the following lemma.
\begin{lem}\label{lem-divQsc1}
The polynomial  $1-q^at^{k+1}$ does not divides $\num{\frac{\Den{\Qsc(k,a;m,b+1)}}{\Den{\Qsc(k,a;m,b)}}}.$
\end{lem}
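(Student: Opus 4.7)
The plan is to exploit the explicit factorization \refeq{eq-Vi} of the path from $\Qsc(k,a;m,b)$ to $\Qsc(k,a;m,b+1)$ as an affine prefix $\affinepath^{mk}$ followed by $m$ successive block jumps $V_{j-1}\to V_j$, and to analyze each piece separately. The affine prefix contributes, via $\triv$, only pure powers of $q$ to the denominator quotient, so it cannot carry any factor of the form $1-q^\alpha t^\beta$; this reduces the question to the $m$ block jumps.

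I would then apply the Block Jump Rule (Theorem \ref{thm-block}) to each edge $V_{j-1}\xrightarrow[((j-1)k+1;(n-m)k,k)]{\jumppath} V_j$. Using the two spectral values recorded in the excerpt,
\begin{equation*}
\spectre{V_{j-1}}[(j-1)k+1]=t^{(n-m)k-1}, \qquad
\spectre{V_{j-1}}[(n-m+j)k+1]=q^{(m-j)a+b+1}t^{(n-j)k-1},
\end{equation*}
Theorem \ref{thm-block} yields that $\num{\Den{V_j}/\Den{V_{j-1}}}$ divides
\begin{equation*}
D_j=\prod_{i=1}^{k}\bigl(1-q^{(m-j)a+b+1}t^{(m-j)k+i}\bigr).
\end{equation*}
Iterating the Conjonction rule over $j=1,\dots,m$ then reduces the claim to an elementary divisibility check: show that $1-q^at^{k+1}$ does not divide any individual factor $1-q^{(m-j)a+b+1}t^{(m-j)k+i}$ with $1\leq j\leq m$ and $1\leq i\leq k$.

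For this final check I would invoke Proposition \ref{prop-specomega} with $d=\gcd(a,k+1)$ and the specialization $(q,t)=(\omega u^{-(k+1)/d},u^{a/d})$, where $\omega$ is a primitive $a$-th root of unity such that $\omega^{a/d}$ is a primitive $d$-th root of unity. Substituting and collecting the exponents of $u$, the putative divisibility forces simultaneously $a\mid b+1$ and $ai=a(m-j)+(b+1)(k+1)$. Since $0\leq b<a$, the first condition forces $b+1=a$; plugging this back into the second yields $i=(m-j)+(k+1)$, which strictly exceeds $k$ because $m\geq j$, a contradiction.

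I expect this last arithmetic verification, together with the correct choice of $\omega$ satisfying the primitivity hypothesis of Proposition \ref{prop-specomega}, to be the main technical step; the rest of the argument is a mechanical assembly from the Block Jump Rule and the Conjonction rule, both already established in the previous sections.
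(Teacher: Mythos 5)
Your proposal is correct and follows essentially the same route as the paper: decompose the $\rai$-path via \refeq{eq-Vi}, apply Theorem~\ref{thm-block} to each block jump $V_{j-1}\to V_j$ to get the same $D_j$, combine with the Conjonction rule, and rule out divisibility by $1-q^at^{k+1}$ via Proposition~\ref{prop-specomega} with the same specialization $(q,t)=(\omega u^{-(k+1)/d},u^{a/d})$, $d=\gcd(a,k+1)$. The only small addition is that you explicitly note the affine prefix $\affinepath^{mk}$ contributes only $q$-powers, a point the paper leaves implicit when invoking the Conjonction rule.
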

We proceed in the same way for Equality \refeq{eq-wi}. Let $1\leq j\leq m+1$. We have
\begin{equation}
\spectre{W_{j-1}}[(j-1)k+1]=t^{(n-m-1)k-1}
\end{equation}
and
\begin{equation}
\spectre{W_{j-1}}[(n-m+j-1)k+1]=q^{(m-j+1)a+1}t^{(n-j)k-1}.
\end{equation}
Applying Theorem \ref{thm-block}, we obtain that $\num{\frac{D_{w_j}}{D_{w_{j-1}}}}$ divides
\begin{equation}
E_j=\prod_{i=1}^k\big(1-q^{(m-j)a+1}t^{(m-j)k+i}\big).
\end{equation}
Following Proposition \ref{prop-specomega}, we have to prove that for any $1\leq i\leq k$, 
the polynomial $1-q^{(m-j)a+1}t^{(m-j)k+i}$ does not vanish for $(q,t)=(\omega u^{-\frac{1+k} d},u^{\frac ad})$, where $d=\gcd(a,1+k)$ and $\omega$ is a primitive $a$th-root of the unity. We have
\begin{equation}
1-\big({\omega}u^{-\frac{1+k}d}\big)^{(m-j)a+1}(u^{\frac ad})^{(m-j)k+i}=1-\omega u^{-\frac ad(m-j)-\frac{(k+1)}d+\frac{ai}d} \neq 0,
\end{equation}
Hence,  $1-q^at^{k+1}$ does not divides $E_j$ and, therefore, it does not divides  $\num{\frac{D_{W_j}}{D_{W_{j-1}}}}$. From \refeq{eq-genQsc2}, and using successively many times the Conjonction rule (see Section \ref{subsec-denom}), we obtain the following lemma.
\begin{lem}\label{lem-divQsc2}
The polynomial  $1-q^at^{k+1}$ does not divide $\num{\frac{\Den{\Qsc(k,a;m+1,1)}}{\Den{\Qsc(k,a;m,a)}}}.$
\end{lem}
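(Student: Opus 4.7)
\medskip\noindent\textbf{Proof proposal.}
The plan is to mirror the argument just given for Lemma~\ref{lem-divQsc1}, applied now to the path~\refeq{eq-wi}. That path decomposes the transition from $\Qsc(k,a;m,a)$ to $\Qsc(k,a;m+1,1)$ as an affine prefix $\affinepath^{(m+1)k}$ followed by $m+1$ elementary block jumps $W_{j-1}\to W_j$. By the definition of $\triv$, each affine edge contributes only a pure power of $q$, so the affine portion cannot produce a factor of the form $1-q^a t^{k+1}$. The problem thus reduces to controlling the contribution of the jumps.

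For each jump $W_{j-1}\to W_j$, the spectral values recalled just above, combined with Theorem~\ref{thm-block}, show that $\num{\Den{W_j}/\Den{W_{j-1}}}$ divides the polynomial $E_j$ displayed above. The central verification is that $1-q^a t^{k+1}$ divides none of the $E_j$. By Proposition~\ref{prop-specomega}, this amounts to checking that, at the specialization $(q,t)=(\omega\,u^{-(k+1)/d},\,u^{a/d})$ with $d=\gcd(a,k+1)$ and $\omega$ a suitable primitive $a$-th root of unity, each factor $1-q^{(m-j)a+1}t^{(m-j)k+i}$ of $E_j$ is nonzero. This is exactly the direct evaluation performed just above the statement of the lemma.

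Finally, I would assemble these local estimates via iterated applications of the Conjunction rule of Section~\ref{subsec-denom}, combined with~\refeq{eq-genQsc2}, to conclude that
\begin{equation*}
\num{\frac{\Den{\Qsc(k,a;m+1,1)}}{\Den{\Qsc(k,a;m,a)}}}
\end{equation*}
divides the product of the affine contribution (a power of $q$) and the polynomials $E_j$; a last appeal to Proposition~\ref{prop-specomega} applied to this product then yields the claim.

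I expect the only nontrivial point to be the congruence observation on the $q$-exponent $(m-j)a+1$ modulo $a$: since this integer is congruent to $1$ modulo $a$, it can never be a positive multiple of $a$, which is precisely what rules out condition~(2) of Proposition~\ref{prop-specomega} for the factor $1-q^a t^{k+1}$ against any $1-q^{(m-j)a+1}t^{(m-j)k+i}$. Everything else is bookkeeping along the Yang--Baxter path, in exact parallel with the preceding Lemma~\ref{lem-divQsc1}.
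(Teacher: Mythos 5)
Your proposal follows the same route as the paper: decompose the path~\refeq{eq-wi} into the affine prefix $\affinepath^{(m+1)k}$ (contributing only a power of $q$) followed by the block jumps $W_{j-1}\to W_j$, bound each $\num{\Den{W_j}/\Den{W_{j-1}}}$ by $E_j$ via Theorem~\ref{thm-block}, rule out $1-q^at^{k+1}$ as a factor of each $E_j$ via Proposition~\ref{prop-specomega}, and assemble via the Conjunction rule together with~\refeq{eq-genQsc2}. Your closing congruence remark, that the $q$-exponent of every factor of $E_j$ is $\equiv 1\pmod a$, is exactly what makes the paper's direct specialization come out nonzero (it reduces $\omega^{(m-j)a+1}$ to $\omega$), so it is the criterion of item~(2) of Proposition~\ref{prop-specomega} phrased where the paper uses item~(3); the two are equivalent.
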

From Lemmas \ref{lem-divQsc1} and \ref{lem-divQsc2} and formula \refeq{eq-genQcs3}, we deduce that $1-q^{a}t^{k-1}$ does not divide $\num{\frac{\Den{\Qsc(k,a;m,a+1)}}{\Den{\Qsc(k,a;m,a)}}},$
and then from \refeq{eq-genstaircase}, we deduce
\begin{thm}
The polynomial $1-q^{a}t^{k+1}$ does not divide
$\Den{\staircase(k,a,n)}$.
\end{thm}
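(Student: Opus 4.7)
The plan is to reach $\staircase(k,a,n)$ from the zero vector along the canonical path described by \refeq{eq-genstaircase}, and to control the denominator at each step using Lemmas \ref{lem-divQsc1} and \ref{lem-divQsc2}. Unfolding the $\upop$ operators into their constituent $\addop$ and $\rai$ blocks produces a chain of $\Qsc$ vertices $v_0=0^{nk}, v_1, \ldots, v_L=\staircase(k,a,n)$, where every step $v_i \to v_{i+1}$ is either a $\rai$ move (covered by Lemma \ref{lem-divQsc1}) or an $\addstep$ move (covered by Lemma \ref{lem-divQsc2}).

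The Conjunction rule of Section \ref{subsec-denom}, applied iteratively along this chain, gives that $\num{\frac{\Den{\staircase(k,a,n)}}{\Den{0^{nk}}}}$ divides the product $\displaystyle\prod_{i=0}^{L-1}\num{\frac{\Den{v_{i+1}}}{\Den{v_i}}}$. Since $M_{0^{nk}}=1$, we have $\Den{0^{nk}}=1$, so $\Den{\staircase(k,a,n)}$ itself divides this product. The final step is to verify that $1-q^a t^{k+1}$ does not divide the product. The proofs of the two lemmas exhibit each $\num{\frac{\Den{v_{i+1}}}{\Den{v_i}}}$ as a divisor of an explicit product of factors of the form $1-q^\alpha t^\beta$ (the polynomials $D_j$ and $E_j$). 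Hence the full product is again of the form $\prod_j(1-q^{\alpha_j}t^{\beta_j})$, and by Proposition \ref{prop-specomega} it is divisible by $1-q^a t^{k+1}$ if and only if some individual factor $1-q^{\alpha_j}t^{\beta_j}$ is. But that would force $1-q^at^{k+1}$ to divide one of the numerators $\num{\frac{\Den{v_{i+1}}}{\Den{v_i}}}$, contradicting Lemma \ref{lem-divQsc1} or Lemma \ref{lem-divQsc2}.

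The only delicate point is this last implication: since $1-q^a t^{k+1}$ is generally reducible in $\mathbb{C}[q,t]$, non-divisibility of each piece does not immediately transfer to non-divisibility of the product. The tool that bridges this gap is precisely Proposition \ref{prop-specomega}, which reduces divisibility of a product $\prod(1-q^{\alpha_j}t^{\beta_j})$ by $1-q^a t^{k+1}$ to an arithmetic condition on the pairs $(\alpha_j,\beta_j)$ and guarantees that it is realized in at least one factor. Once this is invoked the conclusion is immediate: no such pair is produced along the path, so $1-q^a t^{k+1}$ does not appear in $\Den{\staircase(k,a,n)}$.
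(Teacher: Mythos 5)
Your proposal follows essentially the same route as the paper: chain the $\Qsc$ intermediate vertices from $0^{nk}$ to $\staircase(k,a,n)$ via $\rai$ and $\addstep$ moves, apply the Conjunction rule along the chain, and invoke Proposition~\ref{prop-specomega} to rule out the factor $1-q^a t^{k+1}$. You are right to flag the reducibility of $1-q^a t^{k+1}$ as the delicate point, and right that Proposition~\ref{prop-specomega} is precisely what closes it; the paper's written deduction from Lemmas~\ref{lem-divQsc1} and \ref{lem-divQsc2} to the theorem glosses over this.

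One small imprecision in your last step: divisibility of a single binomial $1-q^{\alpha_j}t^{\beta_j}$ by $1-q^a t^{k+1}$ does not literally force $1-q^a t^{k+1}$ to divide the corresponding $\num{\frac{\Den{v_{i+1}}}{\Den{v_i}}}$ (that quotient is only a \emph{divisor} of $D_j$ or $E_j$, and need not contain that particular factor). The correct contradiction is with the explicit computations inside the proofs of the two lemmas, which show that \emph{no} binomial factor occurring in any $D_j$ or $E_j$ is divisible by $1-q^a t^{k+1}$; once a single such factor is exhibited, Proposition~\ref{prop-specomega}(2) is violated. Likewise, "the full product is again of the form $\prod_j(1-q^{\alpha_j}t^{\beta_j})$" should be "the full product divides a polynomial of the form $q^{*}\prod_j(1-q^{\alpha_j}t^{\beta_j})$," and it is to this last polynomial that Proposition~\ref{prop-specomega} is applied. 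These are minor rewordings; the structure and conclusion of your argument are sound.
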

\section{Conclusion and Perspective}
Beyond illustrating that certain properties of Macdonald polynomials can be proved through elementary manipulations on the Yang–Baxter graph, thereby recasting some difficult algebraic problems into combinatorial statements that are both more accessible and easier to resolve, our results constitute a step toward the proof of the conjectures of Bernevig and Haldane \cite{BH2008_2}. Here is the programme of study that we propose to pursue in the forthcoming papers :
\begin{itemize}
    \item By the use of Proposition \ref{prop-elemjumps}, Jump operations make it possible to construct Macdonald polynomials while avoiding the appearance of unnecessary intermediate poles , which are troublesome when one wishes to perform specializations. Using this strategy, we were able to show that staircase polynomials avoid certain poles that play a crucial role in specializations for which the Dunkl operators vanish, see {\it e.g.} \cite{Dunkl2019,CD2020}. These properties can be generalized by providing explicit formulas for the denominators (or, at worst, for polynomials divisible by these denominators) of Macdonald polynomials indexed by certain vectors. In particular, the case of partitions should admit a simple combinatorial formulation and would be especially interesting to understand, since it would allow one, via symmetrization, to derive information about the denominators of symmetric Macdonald polynomials, which, after degeneration of the parameters, yield the polynomials appearing in Haldane’s formulas \cite{BH2008_1,BH2008_2}.
    \item In \cite{CDL2018,CDL2022}, one of the authors, in collaboration with L. Colmenarejo and C. F. Dunkl, derived explicit formulas for quasi-staircase Macdonald polynomials. These formulas express the considered Macdonald polynomials, for some specialization of the variables and the parameters, in a completely factorized form, as generalized Vandermonde polynomials. For instance, one has
    \begin{equation}
    \begin{array}{l}
    M_{221100}(x_1,x_2,ty_1,y_1,ty_2,ty_2;t=v^2, q=v^{-3})=\\\displaystyle v^{45}(vy_2-y_1)(y_2-vy_1)(v^2y_2-y_1)(v^4y_2-y_1)\prod_{\genfrac{}{}{0pt}{2}{i,j\in\{1,2\}} {k\in\{1,4\}}}(v^ky_i-x_j).
    \end{array}
    \end{equation}
    Others formulas are also known \cite{MacdoforDummies}, allowing us to express Macdonald polynomials indexed by repeated staircase as similar products. For instance, one has
    \begin{equation}
        M_{210210}(\mathbf x;q=t^{-3})=t^6\prod_{1\leq i<j\leq 6}\left( t{\it x_j}-{\it x_i} \right) .
    \end{equation}
    Finally, numerical evidence suggests more general formulas involving Macdonald polynomials indexed by repeated staircases whose steps have heights greater than one. For instance, one has
    \begin{equation}
    \begin{array}{l}
    M_{420420}(x_1,ty_1,ty_2,x_2,y_1,y_2;t=v^2,q=v^{-3})=\\\displaystyle
    v^{45}(vy_1-y_2)(vy_2-y_1)(v^2y_2-y_1)^2(v^4y_1-x_1)(v^4y_2-x_1)\\\displaystyle\times(v^2y_1-x_2)(v^2y_2-x_2)\prod_{i,j\in\{1,2\}}(vy_i-x_j),
    \end{array}
    \end{equation}
    \begin{equation}
    \begin{array}{l}
        M_{4202020}(x,t^2y_1,t^2y_2,ty_1,ty_2,y_1,y_2;t=v, q=-v^{-2})=\\\displaystyle
        v^3(y_1+y_2)(vy_1+y_2)(vy_2+y_1)(vy_2-y_1)^3\prod_{\genfrac{}{}{0pt}{2}{i\in\{1,2\}}{\alpha\in\{-v,v^3\}}}(\alpha y_i-x)
    \end{array}\end{equation}
    and
    \begin{equation}
        \begin{array}{l}
        M_{630630}(x_1,ty_1,ty_2,x_2,y_1,y_2;t=v, q=e^{\frac{2\i\pi} 3}v^{-1})=\\\displaystyle
        (vy_2-y_1)^2\prod_{\genfrac{}{}{0pt}{2}{i,j\in\{1,2\},\ i\neq j}{\alpha\in\{1,v\}}}(\alpha e^{\frac{2\i\pi} 3}y_i-y_j)\times\\\displaystyle\times
        \prod_{\genfrac{}{}{0pt}{2}{i,j\in\{1,2\}}{\alpha\in\{1,v\}}}(\alpha e^{\frac{2\i\pi}3} y_i-x_j)
        \prod_{i\in\{1,2\}}(v^2y_i-x_1)(vy_i-x_2).
        \end{array}
    \end{equation}
    
    We expect to obtain unified formulas for these families, together with elementary combinatorial proofs based on the Yang–Baxter graph.
    \item Finally, our objective is to use the tools developed here to give proofs, as elementary and combinatorial as possible, of the two remaining conjectures of Bernevig and Haldane.
\end{itemize}
Other notions introduced in this paper call for further investigation. For instance, Proposition \ref{prop-elemjumps} suggests that one may focus on subgraphs of the Yang-Baxter graph and explore their algebraic properties. To illustrate these ideas, let us consider \(N=3k\) variables and the family of Macdonald polynomials indexed by vectors of the form
$
[a_1,a_1,a_1,a_2,a_2,a_2,\ldots,a_k,a_k,a_k].
$ 
One can extract from the Yang-Baxter graph a subgraph that is isomorphic to the Yang-Baxter graph for $k$ variables. In particular, the vertices
$
[a_1,a_1,a_1,a_2,a_2,a_2,\ldots,a_k,a_k,a_k]$
correspond to the vertices $[a_1,a_2,\ldots,a_k]$, the jumps $J^*_{3m+1,k}$ correspond to the non-affine edges 
$\Yang{?}{m+1}$ (the precise correspondence remains to be established), and the (triple) affine actions $\affineop^3$ correspond to the (single) affine action $\affineop$.
This raises the question of how this graph should be interpreted within the framework of DAHA representations, and of its relation to partially symmetric Macdonald polynomials \cite{Goodberry2024}.

\subsubsection*{Acknowledgments}
We acknowledge support from the Plan France 2030 through the project
ANR-22-PETQ-0006. 

\def\doi#1{doi: \href{https://doi.org/#1}{#1}}
\def\isbn#1{isbn: #1}

\end{document}